\numberwithin{equation}{section}
\newtheorem{Prop}[equation]{Proposition}
\newtheorem{Thm}[equation]{Theorem}
\newtheorem{Lem}[equation]{Lemma}
\newtheorem{Cor}[equation]{Corollary}
\theoremstyle{definition}\newtheorem{Def}[equation]{Definition}
\newtheorem{Ex}[equation]{Example}
\newtheorem{Rem}[equation]{Remark}
\theoremstyle{definition}
\newcommand{\N}{\mathbb{N}}
\newcommand{\Q}{\mathbb{Q}}
\newcommand{\Z}{\mathbb{Z}}
\newcommand{\F}{\mathbb{F}}
\newcommand{\Int}{\textnormal{Int}}
\newcommand{\nil}{\textnormal{nil}}
\newcommand{\mfA}{\mathfrak{A}}
\newcommand{\mfP}{\mathfrak{P}}
\newcommand{\mfQ}{\mathfrak{Q}}
\newcommand{\mfa}{\mathfrak{a}}
\newcommand{\olA}{\overline{A}}
\newcommand{\whA}{\widehat{A}}
\newcommand{\whD}{\widehat{D}}
\title{Non-triviality Conditions for\\ Integer-valued Polynomial Rings on Algebras}
\date{\today}
\author{
G.\ Peruginelli\footnote{Via Pietro Coccoluto Ferrigni 68, 57125 Livorno, Italy. E-mail: g.peruginelli@tiscali.it}\\
N. J. Werner\footnote{Department of Mathematics, Computer and Information Science, State University of New York College at Old Westbury, Old Westbury, NY. E-mail: wernern@oldwestbury.edu}
}
\begin{document}


\maketitle

\begin{abstract}
\noindent Let $D$ be a commutative domain with field of fractions $K$ and let $A$ be a torsion-free $D$-algebra such that $A \cap K = D$. The ring of integer-valued polynomials on $A$ with coefficients in $K$ is $\Int_K(A) = \{f \in K[X] \mid f(A) \subseteq A\}$, which generalizes the classic ring $\Int(D) = \{f \in K[X] \mid f(D) \subseteq D\}$ of integer-valued polynomials on $D$.

The condition on $A \cap K$ implies that $D[X] \subseteq \Int_K(A) \subseteq \Int(D)$, and we say that $\Int_K(A)$ is nontrivial if $\Int_K(A) \ne D[X]$. For any integral domain $D$, we prove that if $A$ is finitely generated as a $D$-module, then $\Int_K(A)$ is nontrivial if and only if $\Int(D)$ is nontrivial. When $A$ is not necessarily finitely generated but $D$ is Dedekind, we provide necessary and sufficient conditions for $\Int_K(A)$ to be nontrivial. These conditions also allow us to prove that, for $D$ Dedekind, the domain $\Int_K(A)$ has Krull dimension 2.\\

\noindent Keywords: Integer-valued polynomial, Algebraic algebra of bounded degree, Maximal subalgebra, Krull dimension\\

\noindent MSC Primary 13F20 Secondary 13B25, 11C99
\end{abstract}

\section{Introduction}\label{Intro}
Given a (commutative) integral domain $D$ with fraction field $K$, we define $\Int(D) := \{f \in K[X] \mid f(D) \subseteq D\}$, which is the ring of integer-valued polynomials on $D$. Integer-valued polynomials and the properties of $\Int(D)$ have been well studied; the book \cite{CaCh} covers the major theory in this area and provides an extensive bibliography. In recent years, researchers have begun to study a generalization of $\Int(D)$ to polynomials that act on a $D$-algebra rather than on $D$ itself \cite{EFJ}, \cite{EvrJoh}, \cite{Fri1}, \cite{Fri1Corr}, \cite{Fri2}, \cite{LopWer}, \cite{Per1}, \cite{PerDivDiff}, \cite{PerFinite}, \cite{PerWer}, \cite{PerWer1}, \cite{Wer}. For this generalization, we let $A$ be a torsion-free $D$-algebra such that $A \cap K = D$, and let $B = K \otimes_D A$, which is the extension of $A$ to a $K$-algebra. By identifying $K$ and $A$ with their images under the injections $k \mapsto k \otimes 1$ and $a \mapsto 1 \otimes a$, we can evaluate polynomials in $K[X]$ at elements of $A$. This allows us to define $\Int_K(A) := \{f \in K[X] \mid f(A) \subseteq A\}$, which is the ring of integer-valued polynomials on $A$ with coefficients in $K$. With notation as above, the condition $A \cap K = D$ ensures that $D[X] \subseteq \Int_K(A) \subseteq \Int(D)$.

\begin{Def}\label{Nontrivial}
We say that $\Int_K(A)$ is \textit{nontrivial} if $\Int_K(A) \ne D[X]$.
\end{Def}

The goal of this paper is to determine when $\Int_K(A)$ is nontrivial. Some results in this direction were proved by Frisch in \cite[Lem. 4.1]{Fri2} and \cite[Thm. 4.3]{Fri2}; these are restated below in Proposition \ref{Fin gen nontrivial}. In the traditional case, necessary and sufficient conditions for $\Int(D)$ to be nontrivial were given by Rush in \cite{Rush}. Using Rush's criteria, we prove (Theorem \ref{Nontriv fin gen criterion}) that when $D$ is any integral domain and $A$ is finitely generated as a $D$-module, $\Int_K(A)$ is nontrivial if and only if $\Int(D)$ is nontrivial. Part of this work involves conditions under which we have $D[X] \subseteq \Int_K(M_n(D)) \subseteq \Int_K(A)$ for some $n$, where $M_n(D)$ is the algebra of $n \times n$ matrices with entries in $D$. This led us to investigate whether having $\Int_K(M_n(D)) = \Int_K(A)$ implies that $A \cong M_n(D)$. While this is not true in general, the result does hold if $D$ is a Dedekind domain and $A$ can be embedded in $M_n(D)$ (Theorem \ref{Uniqueness of M_n(D)}).

If we drop the assumption that $A$ is finitely generated as a $D$-module, determining whether $\Int_K(A)$ is nontrivial becomes more complicated. However, when $D$ is Dedekind, we are able to give necessary and sufficient conditions for $\Int_K(A)$ to be nontrivial (Theorem \ref{criterion}). Our work on this topic also allows us to prove that if $D$ is Dedekind, then $\Int_K(A)$ has Krull dimension 2 (Corollary \ref{Krull dimension}). This generalizes another theorem of Frisch \cite[Thm. 5.4]{Fri1} where it was assumed that $A$ was finitely generated as a $D$-module.

\section{Integral Algebras of Bounded Degree}\label{Non-trivial}
Throughout, $D$ denotes an integral domain with field of fractions $K$, and $A$ denotes a $D$-algebra. We will always assume that $A$ satisfies certain conditions, which we call our \textit{standard assumptions}.

\begin{Def}\label{Standard assumptions}
When $A$ is a torsion-free $D$-algebra such that $A \cap K = D$, we say that $A$ is a $D$-algebra with \textit{standard assumptions}. When $A$ is finitely generated as a $D$-module, we say that $A$ is of \textit{finite type}.
\end{Def}

As mentioned in the introduction, the condition that $A \cap K = D$ implies that
\begin{equation*}
D[X] \subseteq \Int_K(A) \subseteq \Int(D)
\end{equation*}
and it is natural to consider when $D[X] = \Int_K(A)$ or $\Int_K(A) = \Int(D)$. This latter equality is investigated in \cite{IntdecompII}, where the following theorem is proved. Unless stated otherwise, all isomorphisms are ring isomorphisms.

\begin{Thm}\label{Int_K(A)=Int(D)}
\cite[Thms. 2.10, 3.10]{IntdecompII} Let $D$ be a Dedekind domain with finite residue rings. Let $A$ be a $D$-algebra of finite type with standard assumptions. For each maximal ideal $P$ of $D$, let $\whA_P$ and $\whD_P$ be the $P$-adic completions of $A$ and $D$, respectively. Then, the following are equivalent.
\begin{enumerate}[(1)]
\item $\Int_K(A) = \Int(D)$.
\item For each nonzero prime $P$ of $D$, there exists $t \in \N$ such that $A/PA \cong \bigoplus_{i=1}^t D/P$.
\item For each nonzero prime $P$ of $D$, there exists $t \in \N$ such that $\whA_P \cong \bigoplus_{i=1}^t \whD_P$.
\end{enumerate}
\end{Thm}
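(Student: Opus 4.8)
The plan is to reduce the global statement to a local, complete one and then prove the resulting equivalence by hand. Since $D$ is Dedekind and $A$ is of finite type, both sides localize: $\Int(D)_P = \Int(D_P)$ and $\Int_K(A)_P = \Int_K(A_P)$ for each maximal ideal $P$, where $A_P = D_P \otimes_D A$; moreover $\Int(D) = \bigcap_P \Int(D_P)$ and $\Int_K(A) = \bigcap_P \Int_K(A_P)$ inside $K[X]$. Hence (1) holds iff $\Int_K(A_P) = \Int(D_P)$ for every $P$, and it suffices to work one prime at a time over the DVR $D_P$. Because the residue field $\F_q = D/P$ is finite, density of $D_P$ in $\whD_P$ and of $A_P$ in $\whA_P$, together with continuity of polynomial evaluation, let me pass freely to the completion: for $f \in K[X]$ one has $f(A_P) \subseteq A_P$ iff $f(\whA_P) \subseteq \whA_P$ (and similarly over $\whD_P$), the point being that $B \cap \whA_P = A_P$ since $K \cap \whD_P = D_P$. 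After these reductions the theorem becomes the statement that, over the complete DVR $\whD_P$ with finite residue field, $\Int(\whA_P) = \Int(\whD_P)$ iff $A/PA \cong \F_q^t$ iff $\whA_P \cong \whD_P^t$, and I would prove the cycle (1)$\Rightarrow$(2)$\Rightarrow$(3)$\Rightarrow$(1).

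For (3)$\Rightarrow$(1), if $\whA_P \cong \whD_P^t$ then evaluation is componentwise, so any $f$ with $f(\whD_P)\subseteq \whD_P$ satisfies $f(\whA_P)\subseteq\whA_P$; this is the easy direction. The direction (1)$\Rightarrow$(2) rests on a single classical integer-valued polynomial. Fixing a set $R \subseteq \whD_P$ of representatives of $\F_q$ and a uniformizer $\pi$, the polynomial $g(X) = \frac{\prod_{c \in R}(X - c)}{\pi}$ lies in $\Int(\whD_P)$ because every $d \in \whD_P$ is congruent mod $\pi$ to exactly one $c \in R$. For $a \in \whA_P$ one has $g(a) \in \whA_P$ iff $\prod_{c\in R}(a-c) \in \pi\whA_P$, and reducing modulo $P$ the product becomes $\prod_{c \in \F_q}(X - c) = X^q - X$ evaluated at $\bar a$. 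Thus $g \in \Int(\whA_P)$ iff $\bar a^{\,q} = \bar a$ for every $\bar a \in A/PA$. But $x^q = x$ for all $x$ in the finite $\F_q$-algebra $A/PA$ forces, by Jacobson's commutativity theorem together with reducedness, that $A/PA$ is a product of finite fields each of which is a subfield of $\F_q$; that is, $A/PA \cong \F_q^t$. So if (1) holds then $g \in \Int(\whA_P)$, and (2) follows.

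It remains to prove (2)$\Rightarrow$(3), where I lift the splitting of $A/PA = \whA_P/P\whA_P \cong \F_q^t$ through the $P$-adic completeness of $\whA_P$. The $t$ orthogonal idempotents summing to $1$ in $A/PA$ lift (Hensel) to orthogonal idempotents $e_1,\dots,e_t$ with $\sum_i e_i = 1$ in $\whA_P$. The potential difficulty is that $\whA_P$ need not be commutative, so the $e_i$ need not be central; I would resolve this with a Peirce decomposition and a rank count. Since the $\bar e_i$ are central in the commutative ring $A/PA$, each off-diagonal block $e_i \whA_P e_j$ with $i\neq j$ reduces to $0$ modulo $P$ and hence lies in $P\whA_P$; meanwhile the diagonal blocks $e_i\whA_P e_i$ already contribute total $\whD_P$-rank at least $t$ to the free rank-$t$ module $\whA_P$, so every off-diagonal block must have rank $0$ and therefore, by torsion-freeness, vanishes. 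Consequently the $e_i$ are central, $\whA_P \cong \prod_{i=1}^t e_i\whA_P$ as rings, and each factor is free of rank $1$ with identity $e_i$, hence isomorphic to $\whD_P$. This yields $\whA_P \cong \whD_P^t$ and closes the cycle. I expect this last step — lifting the étale splitting while taming the noncommutativity via the rank argument — to be the main obstacle; the localization and completion bookkeeping is routine once the finiteness of the residue fields is invoked, and the arithmetic heart of the matter is the observation that the single polynomial $g$ detects exactly the condition $x^q = x$ on $A/PA$.
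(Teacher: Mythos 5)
The paper never proves this statement: Theorem \ref{Int_K(A)=Int(D)} is imported verbatim from \cite{IntdecompII}, so there is no internal proof to compare against, and the only related machinery in the paper is Lemma \ref{wellbehaviourlocalization} (your localization step, which applied to $A=D$ also gives $\Int(D)_P=\Int(D_P)$) and Lemma \ref{DVR lemma} (your passage to the completion). Judged on its own, your proposal is essentially a correct, self-contained proof. The reductions are legitimate for $A$ of finite type (freeness of $A_P$ over the DVR $D_P$ is what gives $B\cap\whA_P=A_P$ and $\whA_P\cap\whK_P=\whD_P$); the polynomial $\prod_{c\in R}(X-c)/\pi$ does detect exactly the condition $x^q=x$ on $A/PA$; Jacobson's theorem plus reducedness plus finiteness does yield a finite product of fields; and the (2)$\Rightarrow$(3) step via Hensel lifting of the complete orthogonal family of idempotents, Peirce decomposition, and the rank count over $\whD_P$ is correct (indeed the rank count alone kills the off-diagonal blocks; the observation that they lie in $P\whA_P$ is not even needed).

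Three points need patching, all routine. (i) You must choose the residue representatives $R$ inside $D_P$ (possible since $D_P$ surjects onto $D_P/PD_P$), not inside $\whD_P$: as written, $g\notin K[X]$, and statement (1), which concerns $K$-coefficients, says nothing about $g$; if instead you insist on $\whK_P$-coefficients, your completion bridge (stated only for $f\in K[X]$) no longer applies and you would need an additional density argument. (ii) ``A product of finite fields each of which is a subfield of $\F_q$'' does not by itself give $\F_q^t$ (it permits, say, $\F_p\times\F_p$ when $q=p^2$); you must add that each factor is a unital quotient of the $\F_q$-algebra $A/PA$, hence contains $\F_q$, and the two containments force each factor to equal $\F_q$. (iii) The paper's convention is that the isomorphisms in (2) and (3) are ring isomorphisms, whereas your (3)$\Rightarrow$(1) uses compatibility of the splitting with the $\whD_P$-structure (otherwise ``evaluation is componentwise'' for $f\in K[X]$ is not justified); this is harmless for your cycle, since your (2)$\Rightarrow$(3) actually produces a $\whD_P$-algebra isomorphism and a ring isomorphism in (2) can be upgraded by twisting with Frobenius componentwise, but it should be said. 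Similarly, ``free of rank $1$ with identity $e_i$, hence isomorphic to $\whD_P$'' deserves the one-line check that $e_i$ generates: if $e_i\whA_P=\whD_P u$ and $e_i=cu$, then $u=ue_i=cu^2$ forces $c$ to be a unit.
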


In this paper, we examine the containment $D[X] \subseteq \Int_K(A)$. In the traditional setting of integer-valued polynomials, the ring $\Int(D)$ is said to be \textit{trivial} if $\Int(D) = D[X]$, and we adopt the same terminology for $\Int_K(A)$. Clearly, for $\Int_K(A)$ to be nontrivial it is necessary that $\Int(D)$ be nontrivial, so we begin by reviewing the situation for $\Int(D)$. Section I.3 of \cite{CaCh} and a paper by Rush \cite{Rush} give several results regarding the triviality or non-triviality of $\Int(D)$. We will summarize these theorems after recalling several definitions.

\begin{Def}
An ideal $\mfa$ of $D$ is said to be the colon ideal or conductor ideal of $q \in K$ if 
$$\mfa = (D :_D q) = \{d \in D \mid dq \in D\}.$$
For a commutative ring $R$, we denote by $\nil(R)$ the nilradical of $R$, which is the set of all nilpotent elements of $R$, or, equivalently, the intersection of all nonzero prime ideals of $R$. For $x \in \nil(R)$, we let $\nu(x)$ equal the nilpotency of $x$, i.e., the smallest positive integer $n$ such that $x^n = 0$. If $I\subseteq R$ is an ideal, let $V(I) = \{P \in \text{Spec}(R) \mid P \supseteq I\}$.
\end{Def}

The following proposition summarizes several sufficient and necessary conditions on $D$ in order for $\Int(D)$ to be nontrivial.
\begin{Prop}\label{IntD nontrivial}\mbox{}
\begin{enumerate}[(1)]
\item \cite[Cor. I.3.7]{CaCh} If $D$ is a domain with all residue fields infinite, then $\Int(D)$ is trivial.
\item \cite[Prop. I.3.10]{CaCh} Let $D$ be a domain. If there is a proper conductor ideal $\mfa$ of $D$ such that $D/\mfa$ is finite, then $\Int(D)$ is nontrivial.
\item \cite[Thm. I.3.14]{CaCh} Let $D$ be a Noetherian domain. Then, $\Int(D)$ is nontrivial if and only if there is a prime conductor ideal of $D$ with finite residue field.
\item \cite[Cor. 1.7]{Rush} Let $D$ be an integral domain. Then, the following are equivalent:
\begin{enumerate}[(i)]
\item $\Int(D)$ is nontrivial.
\item There exist $a,b\in D$ with $b\notin aD$ such that the two sets $\{\;|D/P|\; \mid P \in V((aD:b))\}$ and $\{\nu(x) \mid x\in {\rm nil}(D/(aD:b))\}$ are bounded.
\end{enumerate}
\end{enumerate}
\end{Prop}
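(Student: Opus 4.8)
The plan is to establish each of the four parts in turn, since they are drawn from the cited literature, sketching the essential mechanism behind each. For part (1), I would argue contrapositively: suppose $\Int(D)$ is nontrivial, so there is $f \in \Int(D) \setminus D[X]$. Writing $f = g/d$ with $g \in D[X]$ and $d \in D$ a nonunit chosen so that not every coefficient of $g$ lies in $dD$, I pick a maximal ideal $P \ni d$. Since $f(a) \in D$ for every $a \in D$, reduction modulo $P$ shows that $\bar g \in (D/P)[X]$ vanishes at every point of $D/P$. If $D/P$ is infinite, a nonzero polynomial cannot vanish identically on it, forcing $\bar g = 0$, so every coefficient of $g$ lies in $P$; as $P$ was an arbitrary maximal ideal containing $d$, passing to the localizations $D_P$ one concludes that $d$ divides $g$ in $D$, contradicting the choice of $d$. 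Hence some residue field must be finite.

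For part (2), the construction is explicit. Given a proper conductor ideal $\mfa = (D :_D q)$ with $D/\mfa$ finite, let $\{a_1, \dots, a_e\}$ be a complete set of coset representatives for $D/\mfa$ and set
\[
f(X) = q \prod_{i=1}^{e} (X - a_i).
\]
For any $b \in D$ we have $b \equiv a_i \pmod{\mfa}$ for some $i$, so $\prod_i (b - a_i) \in \mfa$ and therefore $f(b) = q \prod_i(b - a_i) \in D$ by definition of the conductor. Thus $f \in \Int(D)$, while its leading coefficient $q \notin D$ shows $f \notin D[X]$, giving nontriviality.

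For part (3), one direction is immediate from (2), since a prime conductor ideal with finite residue field is in particular a proper conductor ideal with finite quotient. The converse, specific to the Noetherian setting, requires showing that nontriviality forces the existence of such a prime: starting from any witness $f = g/d \in \Int(D) \setminus D[X]$, one analyzes the primes containing the denominator $d$ and uses the ascending chain condition to locate a prime $P$ that is itself a conductor ideal $(D :_D q)$ with $D/P$ finite.

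Part (4), Rush's criterion, is the deepest, and I expect it to be the main obstacle. The strategy is to reduce nontriviality to a statement about the single quotient ring $R = D/(aD:b)$ for a suitable pair $a,b \in D$ with $b \notin aD$: one must show that an integer-valued polynomial beyond $D[X]$ exists precisely when both the residue-field sizes $|D/P|$ over $P \in V((aD:b))$ and the nilpotency indices $\nu(x)$ for $x \in \nil(R)$ remain bounded. The forward direction extracts these two boundedness conditions from the degree and denominator of a witnessing polynomial, while the reverse direction must manufacture an integer-valued polynomial from the bounds, controlling its values simultaneously across all primes of $R$ — this is the technically demanding step. Since all four assertions are established in \cite{CaCh} and \cite{Rush}, for the purposes of this paper I would simply invoke those references rather than reproduce the arguments.
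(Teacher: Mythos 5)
The paper gives no proof of this proposition at all: it is stated purely as a summary of known results, with each part attributed to \cite{CaCh} or \cite{Rush}, so your closing decision to simply invoke those references is exactly what the paper does. Your sketch for part (2) is indeed the standard proof of \cite[Prop. I.3.10]{CaCh}, and your descriptions of (3) and (4) are reasonable roadmaps of the cited arguments.

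However, your sketch of part (1) contains a genuine error, so you should not rely on it as stated. From $g(a) \in dD$ for all $a \in D$ you correctly conclude that every coefficient of $g$ lies in every maximal ideal $P$ containing $d$; but this only places the coefficients in $\bigcap_{P \supseteq dD} P$, which contains $\sqrt{dD}$ and is in general strictly larger than $dD$ (in $\Z$ with $d = 4$ this intersection is $2\Z$, not $4\Z$). So the conclusion that $d$ divides $g$ in $D$ does not follow, and no localization repairs it: one round of reduction modulo $P$ yields only one ``layer'' of divisibility. The argument in \cite[Sec. I.3]{CaCh} is different and avoids this issue entirely: fix a maximal ideal $P$ of $D$, choose $a_0, \ldots, a_n \in D$ (where $n = \deg f$) whose residues modulo $P$ are pairwise distinct --- possible precisely because $D/P$ is infinite --- and solve the linear system $f(a_i) = \sum_k c_k a_i^k$ for the coefficients $c_k$ of $f$ by Cramer's rule. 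The Vandermonde determinant $\prod_{i<j}(a_i - a_j)$ lies outside $P$, hence is a unit in $D_P$, so each $c_k \in D_P$; intersecting over all maximal ideals gives $c_k \in \bigcap_P D_P = D$, i.e., $f \in D[X]$. Since your final stance is to cite \cite{CaCh} and \cite{Rush} rather than reproduce the proofs, this flaw is not load-bearing for the paper, but the mechanism you attribute to part (1) is not the one that works.
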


If $A$ is finitely generated as a $D$-module, Frisch has shown that the analogs of the above conditions in Proposition \ref{IntD nontrivial} hold for $\Int_K(A)$:

\begin{Prop}\label{Fin gen nontrivial}
Let $D$ be a domain. Let $A$ be a $D$-algebra of finite type with standard assumptions.
\begin{enumerate}[(1)]
\item \cite[Lem. 4.1]{Fri2} Assume there is a proper conductor ideal $\mfa$ of $D$ such that $D/\mfa$ is finite. Then, $\Int_K(A)$ is nontrivial.
\item \cite[Thm. 4.3]{Fri2} Assume that $D$ is Noetherian. Then, $\Int_K(A)$ is nontrivial if and only if there is a prime conductor ideal of $D$ with finite residue field.
\end{enumerate}
\end{Prop}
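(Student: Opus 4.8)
The plan is to prove (1) directly by exhibiting an explicit element of $\Int_K(A)\setminus D[X]$, and then to deduce (2) from (1) together with the containment $\Int_K(A)\subseteq\Int(D)$ and the classical Noetherian criterion in Proposition~\ref{IntD nontrivial}(3).

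For (1), write $\mfa=(D:_D q)$ for some $q\in K$, so that $q\mfa\subseteq D$ and, since $\mfa$ is proper, $q\notin D$. My candidate polynomial is $f=q\,g$, where $g\in D[X]$ is a \emph{monic} polynomial with the property that $g(A)\subseteq\mfa A$. Granting such a $g$, we get $f\in\Int_K(A)$ because $f(A)=q\,g(A)\subseteq(q\mfa)A\subseteq DA=A$, while the leading coefficient of $f$ is $q\notin D$, so $f\notin D[X]$; this already shows $\Int_K(A)$ is nontrivial. Thus everything reduces to producing the monic $g$.

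To build $g$, I would use finite type. Since $A$ is generated by finitely many elements as a $D$-module and $D/\mfa$ is finite, the quotient $R:=A/\mfa A$ is a finite (in general noncommutative) ring, being finitely generated over the finite ring $D/\mfa$. It suffices to find a monic polynomial with coefficients in the image of $\Z$ that vanishes at every element of $R$: viewing it in $D[X]$ yields a monic $g$ with $\overline{g(a)}=\bar g(\bar a)=0$ in $R$, i.e.\ $g(A)\subseteq\mfa A$. The construction is where noncommutativity must be handled: one cannot simply form $\prod_{r\in R}(X-r)$. Instead, for each $r\in R$ the powers $r,r^2,r^3,\dots$ lie in the finite set $R$, so $r^{i}=r^{j}$ for some $1\le i<j$, whence $m_r(X):=X^{j}-X^{i}$ is monic in $\Z[X]$ with $m_r(r)=0$. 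The singly generated subring $\Z[r]\subseteq R$ is commutative, so evaluation at $r$ is a ring homomorphism and the product $g:=\prod_{r\in R}m_r$ --- monic, with integer coefficients --- satisfies $g(r)=\prod_{r'\in R}m_{r'}(r)=0$ for every $r$, since the factor $m_r(r)$ vanishes and all factors lie in the commutative ring $\Z[r]$. (Alternatively, one may invoke the fact that a finite multiplicative monoid admits $n,p$ with $x^{n+p}=x^{n}$ for all $x$, giving the uniform choice $g=X^{n+p}-X^{n}$.) This produces the required $g$.

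For (2), the ``if'' direction is immediate from (1): a prime conductor ideal $P$ of $D$ with $D/P$ finite is a proper conductor ideal with finite residue ring, so $\Int_K(A)$ is nontrivial. For the ``only if'' direction, suppose $\Int_K(A)\ne D[X]$; choosing $f\in\Int_K(A)\setminus D[X]$ and using $\Int_K(A)\subseteq\Int(D)$ gives $f\in\Int(D)\setminus D[X]$, so $\Int(D)$ is nontrivial, and since $D$ is Noetherian, Proposition~\ref{IntD nontrivial}(3) furnishes a prime conductor ideal with finite residue field. The main obstacle throughout is the noncommutative annihilation step in (1); once that is resolved, finite type enters only to guarantee that $R=A/\mfa A$ is finite, and (2) follows formally from (1) together with the scalar inclusion $\Int_K(A)\subseteq\Int(D)$.
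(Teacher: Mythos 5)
Your proof is correct, and it takes a genuinely different route from the paper. Note first that the paper does not prove this proposition at all: it is quoted from Frisch \cite{Fri2}, and the paper's own contribution is the more general Theorem \ref{Nontriv fin gen criterion}, proved by different machinery. There, finite generation is used to bound the degree of integrality of elements of $A$ by some $n$, which yields $\Int_K(M_n(D)) \subseteq \Int_K(A)$ (Lemma \ref{Matrix containment}, resting on Frisch's divisibility lemma from \cite{FriSep}), and nontriviality of $\Int_K(M_n(D))$ is then extracted from Rush's criterion (Proposition \ref{IntD nontrivial}(4)) together with the Brawley--Carlitz--Levine polynomials of Definition \ref{BCL polynomials}. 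Your argument is more elementary and self-contained: you work directly with the finite ring $R = A/\mfa A$, and your key step --- taking the annihilating polynomial with coefficients in the prime subring, hence central in $A$, so that evaluation at any $r \in R$ is multiplicative and $g = \prod_{r \in R} m_r$ (or the uniform $X^{n+p}-X^n$) kills all of $R$ --- is exactly the right way to handle the noncommutativity that blocks the naive $\prod_{r}(X-r)$; multiplying by $q$ then produces an element of $\Int_K(A)\setminus D[X]$, since $q\mfa \subseteq D$ while the leading coefficient $q$ lies outside $D$. Part (2) then follows formally, as you say, from part (1), the containment $\Int_K(A) \subseteq \Int(D)$, and Proposition \ref{IntD nontrivial}(3) (for the ``if'' direction one only needs that a prime with finite residue field automatically has $D/P$ finite, being a domain inside a finite field). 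What your approach buys is brevity and independence from matrix rings, Rush's criterion, and the $\phi_{q,n}$; what the paper's approach buys is generality and reusability: Theorem \ref{Nontriv fin gen criterion} covers integral algebras of bounded degree that need not be finitely generated, Rush's criterion makes the equivalence with nontriviality of $\Int(D)$ work over arbitrary (non-Noetherian) domains where the conductor criteria need not be necessary conditions, and the intermediate results on $\Int_K(M_n(D))$ are used again later (e.g., in Theorem \ref{Uniqueness of M_n(D)}).
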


In particular, \cite[Thm. 4.3]{Fri2} shows that for a Noetherian domain $D$ and a finitely generated algebra $A$, $\Int_K(A)$ is nontrivial if and only if $\Int(D)$ is nontrivial. In Theorem \ref{Nontriv fin gen criterion}, we will show that this holds even if $D$ is not Noetherian. Additionally, we can weaken our assumptions on $A$. Recall the following definition, which can be found in \cite{Jac} or \cite{Lam}, among other sources.

\begin{Def}
Let $R$ be a commutative ring and $A$ an $R$-algebra. We say that $A$ is an \emph{algebraic algebra} (over $R$) if every element of $A$ satisfies a polynomial equation with coefficients in $R$. We say that $A$ is an \emph{algebraic algebra of bounded degree} if there exists $n\in\N$ such that the degree of the minimal polynomial equation of each of its elements is bounded by $n$. If we insist that each element of $A$ satisfy a monic polynomial with coefficients in $R$, then we say that $A$ is an \emph{integral algebra} over $R$.
\end{Def}

Algebraic algebras are usually discussed over fields, in which case an algebraic algebra is also an integral algebra. Over a domain however, the two structures are not equivalent. For example, $A = \Z[\frac{1}{2}]$ is an algebraic algebra over $\Z$ that is not an integral algebra. In this case, $A$ does not satisfy our standard assumption that $A \cap \Q$ should equal $\Z$. However, if we instead take $A = \Z \oplus \Z[\frac{1}{2}]$ (so that $B = \Q \otimes_\Z A \cong \Q \oplus \Q$, $D$ is the diagonal copy of $\Z$ in $B$, and $K$ is the diagonal copy of $\Q$ in $B$), then $A$ is an algebraic algebra over $D$, $A$ is not an integral algebra over $D$, and $A \cap K = D$.

Note also that if $A$ is finitely generated as a $D$-module, then $A$ is an integral algebra of bounded degree, with the bound given by the number of generators (see \cite[Thm. 1, Chap. V]{BourbakiAlg} or \cite[Prop. 2.4]{AtMc}). However, the converse does not hold. For instance, $A = D[X_1, X_2, \ldots]/(\{X_i X_j \mid i, j \geq 1 \})$ is not finitely generated, but if $f \in A$ with constant term $d \in D$, then $f$ satisfies the polynomial $(X-d)^2$. Thus, this $A$ is an integral algebra of bounded degree. 

For our purposes, the importance of having a bounding degree $n$, is that it guarantees that $\Int_K(A)$ contains $\Int_K(M_n(D))$, where $M_n(D)$ denotes the algebra of $n \times n$ matrices with entries in $D$.

\begin{Lem}\label{Matrix containment}
Let $D$ be a domain and let $A$ be a $D$-algebra with standard assumptions. Assume that $A$ is an integral $D$-algebra of bounded degree $n$. Then, $\Int_K(M_n(D)) \subseteq \Int_K(A)$.
\end{Lem}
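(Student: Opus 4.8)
The plan is to show that every $f \in \Int_K(M_n(D))$ sends each element of $A$ back into $A$. Fix $a \in A$. Since $A$ is an integral $D$-algebra of bounded degree $n$, the element $a$ satisfies some monic polynomial $g \in D[X]$ with $\deg g \le n$. Multiplying by a suitable power of $X$, I may assume $a$ is annihilated by a monic polynomial $h \in D[X]$ of degree \emph{exactly} $n$: indeed $h(X) = X^{n-\deg g}g(X)$ is monic of degree $n$ and still satisfies $h(a) = 0$. The guiding idea is to compare $a$ with the companion matrix of $h$, which is an element of $M_n(D)$ annihilated by the same polynomial $h$; this lets me transfer the integer-valuedness of $f$ on $M_n(D)$ to $A$.

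Next I would divide in $K[X]$, writing $f = qh + r$ with $q, r \in K[X]$ and $\deg r < n$ (possible since $h$ is monic). Because $a$ commutes with the scalars from $K$, evaluation at $a$ is a ring homomorphism $K[X] \to B$, so $f(a) = q(a)h(a) + r(a) = r(a)$, using $h(a) = 0$. Hence it suffices to prove that $r \in D[X]$: then $f(a) = r(a) = \sum_i r_i a^i$ is a $D$-linear combination of powers of $a \in A$, and so lies in $A$ since $A$ is a $D$-algebra.

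To see that $r$ has coefficients in $D$, let $C \in M_n(D)$ be the companion matrix of $h$, so that $h(C) = 0$. The same division yields $f(C) = r(C)$, and since $f \in \Int_K(M_n(D))$ and $C \in M_n(D)$, we get $r(C) = f(C) \in M_n(D)$. I then invoke the defining property of the companion matrix, namely $C^i e_1 = e_{i+1}$ for $0 \le i \le n-1$, where $e_1, \dots, e_n$ is the standard basis. Writing $r = \sum_{i=0}^{n-1} r_i X^i$, the first column of $r(C)$ equals $\sum_i r_i e_{i+1} = (r_0, \dots, r_{n-1})^{\mathsf T}$; since $r(C)$ has entries in $D$, each $r_i \in D$, so $r \in D[X]$.

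The argument is otherwise routine once this comparison is in place; the one step deserving care is the passage from $r(C) \in M_n(D)$ to $r \in D[X]$, i.e. choosing a concrete model (the companion matrix, or equivalently the regular representation of $D[X]/(h)$ on the free $D$-module with basis $1, X, \dots, X^{n-1}$) in which the coefficients of $r$ appear directly as matrix entries. Normalizing to a monic annihilator of degree \emph{exactly} $n$, rather than merely $\le n$, is what forces $\deg r < n$ and keeps $C$ inside $M_n(D)$, so this reduction should be recorded explicitly. Letting $a$ range over all of $A$ then gives $f(A) \subseteq A$, that is $f \in \Int_K(A)$, which establishes $\Int_K(M_n(D)) \subseteq \Int_K(A)$.
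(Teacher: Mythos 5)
Your proof is correct, and at its core it is the same argument as the paper's, but made self-contained. The paper writes $f = g(X)/d$ with $g \in D[X]$ and $d \in D\setminus\{0\}$, and then cites a lemma of Frisch \cite[Lem. 3.4]{FriSep}: since $g/d \in \Int_K(M_n(D))$, the polynomial $g$ is divisible modulo $dD[X]$ by \emph{every} monic polynomial of degree $n$, in particular by a monic degree-$n$ annihilator $\mu_a$ of $a$; hence $g(a) \in dA$ and $f(a) \in A$. Your division $f = qh + r$ in $K[X]$, followed by evaluation at the companion matrix $C$ of $h$ to read the coefficients of $r$ off the first column of $r(C) = f(C) \in M_n(D)$, is exactly the mechanism behind that cited lemma --- in effect you have inlined its proof rather than quoting it. The two treatments share the same two hinges: normalizing the annihilator of $a$ to be monic of degree exactly $n$ (the paper does this tacitly in choosing $\mu_a$ of degree $n$; you record the padding $h = X^{n-\deg g}g$ explicitly, which is indeed what keeps $C$ inside $M_n(D)$ and forces $\deg r < n$), and the centrality of the coefficients, which makes evaluation at $a$ (and at $C$) multiplicative so that the $qh$ term vanishes. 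What your route buys is an argument with no external dependencies and a transparent explanation of why integer-valuedness on $M_n(D)$ controls values on $A$; what the paper's route buys is brevity, together with a citation to a statement (divisibility of the numerator modulo $dD[X]$ by all monic polynomials of degree $n$) that is formulated for reuse elsewhere.
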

\begin{proof}
Let $a \in A$ and let $\mu_a \in D[X]$ be monic of degree $n$ such that $\mu_a(a) = 0$. Let $f(x) = g(X)/d \in \Int_K(M_n(D))$, where $g \in D[X]$ and $d \in D \setminus \{0\}$. By \cite[Lem. 3.4]{FriSep}, $g$ is divisible modulo $dD[X]$ by every monic polynomial in $D[X]$ of degree $n$; hence, $\mu_a$ divides $g$ modulo $d$. It follows that $g(a) \in dA$ and $f(a) \in A$. Since $a$ was arbitrary, $f \in \Int_K(A)$.
\end{proof}

\begin{Rem}
The converse of Lemma \ref{Matrix containment} does not hold, even in the case when $\Int_K(M_n(D))$ is nontrivial, as Example \ref{integralalgebraboundeddegree not necessary} below will show.
\end{Rem}

Thus, in the case of an integral algebra of bounded degree $n$, to prove that $\Int_K(A)$ is nontrivial it suffices to show that $\Int_K(M_n(D))$ is nontrivial. This task is more tractable, because the polynomials given in the next definition can be used to map $M_n(D)$ into $M_n(P)$, where $P$ is a maximal ideal of $D$ with a finite residue field.

\begin{Def}\label{BCL polynomials}
For each prime power $q$ and each $n > 0$, let 
\begin{equation*}
\phi_{q,n}(X) = (X^{q^n} - X)(X^{q^{n-1}}-X) \cdots (X^q - X).
\end{equation*}
\end{Def}

\begin{Lem}\label{BCL lemma}
\cite[Thm. 3]{BrawCarLev} Let $\F_q$ be the finite field with $q$ elements. Then, $\phi_{q,n}$ sends each matrix in $M_n(\F_q)$ to the zero matrix. Consequently, if $P \subset D$ is a maximal ideal of $D$ with residue field $D/P \cong \F_q$, then $\phi_{q, n}$ maps $M_n(D)$ into $M_n(P)$.
\end{Lem}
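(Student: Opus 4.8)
The plan is to prove the matrix identity over $\F_q$ first and then deduce the statement over $D$ by reducing modulo $P$. For the first assertion, I would fix a matrix $M \in M_n(\F_q)$ and show that its minimal polynomial $\mu_M(X) \in \F_q[X]$ divides $\phi_{q,n}(X)$. Since $\mu_M(M) = 0$, this divisibility immediately yields $\phi_{q,n}(M) = 0$, which is exactly what is claimed. So the entire first part reduces to establishing $\mu_M \mid \phi_{q,n}$.

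To prove this divisibility I would compare the two polynomials factor by factor, using the classical description of $X^{q^k} - X$ as the product of all monic irreducible polynomials in $\F_q[X]$ whose degree divides $k$ (a standard fact about finite fields). Consequently, for a fixed monic irreducible $p$ of degree $d$, its multiplicity in $\phi_{q,n} = \prod_{k=1}^{n}(X^{q^k}-X)$ equals the number of $k \in \{1,\dots,n\}$ divisible by $d$, namely $\lfloor n/d \rfloor$. On the other hand, by Cayley--Hamilton $\mu_M$ divides the characteristic polynomial of $M$, so $\deg \mu_M \le n$. Writing $\mu_M = \prod_i p_i^{e_i}$ with the $p_i$ distinct monic irreducibles of degree $d_i$, the inequality $\sum_i e_i d_i = \deg \mu_M \le n$ forces $e_i d_i \le n$, hence $e_i \le \lfloor n/d_i \rfloor$ for every $i$. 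Thus each irreducible factor of $\mu_M$ occurs in $\phi_{q,n}$ with at least as large a multiplicity, giving $\mu_M \mid \phi_{q,n}$.

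For the second assertion, I would take a maximal ideal $P$ of $D$ with $D/P \cong \F_q$ and let $\rho : M_n(D) \to M_n(D/P) = M_n(\F_q)$ be the entrywise reduction map, which is a ring homomorphism. Because $\phi_{q,n}$ has integer coefficients and the reductions $\Z \to D \to D/P$ are compatible, evaluation commutes with $\rho$, so $\rho(\phi_{q,n}(M)) = \phi_{q,n}(\rho(M))$ for every $M \in M_n(D)$; by the first part this is the zero matrix in $M_n(\F_q)$, which means every entry of $\phi_{q,n}(M)$ lies in $P$, i.e.\ $\phi_{q,n}(M) \in M_n(P)$. The only genuinely substantive step is the multiplicity comparison in the second paragraph, and I expect the main point to be correctly accounting for the multiplicities of repeated irreducible factors of $\mu_M$ (as can arise from nontrivial Jordan structure) — precisely what the bound $e_i \le \lfloor n/d_i \rfloor$ controls; once that and the factorization of $X^{q^k}-X$ are in hand, the divisibility is routine bookkeeping and the passage to $D$ is a formal reduction.
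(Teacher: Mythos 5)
Your proof is correct. Note that the paper itself gives no argument for this lemma at all: it is quoted directly from Brawley--Carlitz--Levine \cite[Thm. 3]{BrawCarLev}, so there is no internal proof to compare against; what you have done is reconstruct, in a self-contained way, exactly the content of the cited result that the lemma uses. Your route --- bounding the multiplicity of each monic irreducible $p_i$ of degree $d_i$ in $\mu_M$ by $\lfloor n/d_i \rfloor$ via $\deg \mu_M \le n$, and matching it against the multiplicity $\lfloor n/d_i\rfloor$ of $p_i$ in $\phi_{q,n}$ coming from the squarefree factorization of each $X^{q^k}-X$ into the monic irreducibles of degree dividing $k$ --- is precisely the mechanism behind the fact, invoked later in the paper (in the proof of Lemma \ref{FqsubalgebrasMnFq}), that $\phi_{q,n}$ is the least common multiple of all monic polynomials of degree $n$ in $\F_q[X]$; you prove only the ``common multiple'' direction, which is all the lemma needs, since any matrix is killed by its minimal polynomial and that polynomial has degree at most $n$ by Cayley--Hamilton. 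The passage to $D$ via the entrywise reduction $M_n(D) \to M_n(D/P)$, using that $\phi_{q,n}$ has integer coefficients so evaluation commutes with the reduction and that the kernel of this map is $M_n(P)$, is the intended formal step. One small point worth making explicit if you write this up: each $X^{q^k}-X$ is squarefree (its derivative is $-1$), so the multiplicity of $p_i$ in $\phi_{q,n}$ is exactly the count of indices $k \le n$ with $d_i \mid k$, not merely at least that; your phrasing ``product of all monic irreducibles of degree dividing $k$'' already encodes this, but it is the hinge of the multiplicity count.
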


\begin{Prop}\label{Nontriv matrix criterion}
Let $D$ be a domain. If $\Int(D)$ is nontrivial, then $\Int_K(M_n(D))$ is nontrivial, for all $n \geq 1$.
\end{Prop}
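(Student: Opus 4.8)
The plan is to combine Rush's criterion (Proposition \ref{IntD nontrivial}(4)) with the matrix-annihilating polynomials $\phi_{q,n}$ of Lemma \ref{BCL lemma}. Since $\Int(D)$ is nontrivial, Rush's criterion produces $a,b \in D$ with $b \notin aD$ such that, writing $\mfa = (aD:b)$ and $R = D/\mfa$, the residue field sizes $\{\,|R/P| : P \in V(\mfa)\,\}$ are bounded by some $B$ and the nilpotency indices $\{\,\nu(x) : x \in \nil(R)\,\}$ are bounded by some $N$. In particular $\mfa$ is proper, so $R \neq 0$, and every prime $P$ of $R$ has finite residue field $R/P \cong \F_{q_P}$ with $q_P \le B$. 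The goal is to produce $h \in D[X]$ whose reduction $\bar h \in R[X]$ is nonzero but which annihilates every matrix in $M_n(R)$; then $f := (b/a)h$ will lie in $\Int_K(M_n(D)) \setminus D[X]$, in analogy with the classical construction of nontrivial integer-valued polynomials from a conductor ideal.

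First I would push all of $M_n(D)$ into the nilradical modulo $\mfa$. Set $\Phi(X) = \prod_q \phi_{q,n}(X)$, the product taken over all prime powers $q \le B$; this is a monic polynomial in $\Z[X] \subseteq D[X]$. For any $M \in M_n(R)$ and any prime $P$ of $R$, the image of $M$ in $M_n(R/P) = M_n(\F_{q_P})$ is annihilated by the factor $\phi_{q_P,n}$ of $\Phi$ (Lemma \ref{BCL lemma}), hence by $\Phi$. Since $R/\nil(R)$ embeds into $\prod_P R/P$, it follows that $\Phi(M) \in M_n(\nil(R))$ for every $M \in M_n(R)$; that is, every entry of $\Phi(M)$ is nilpotent.

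Next I would annihilate the nilpotent matrix $\Phi(M)$ \emph{uniformly}. The $n^2$ entries of $\Phi(M)$ generate an ideal $J$ of $R$ that is finitely generated and nil, with each generator satisfying $z^N = 0$; a pigeonhole argument then gives $J^m = 0$ for $m = n^2(N-1)+1$, a bound depending only on $n$ and $N$. Since a product of $m$ matrices with entries in $J$ has entries in $J^m$, this forces $\Phi(M)^m = 0$ for every $M \in M_n(R)$. Hence $h(X) := \Phi(X)^m$ annihilates all of $M_n(R)$, while $h$ is monic, so $\bar h \neq 0$ in $R[X]$. Lifting $h$ to $D[X]$, for every $M \in M_n(D)$ we get $h(M) \equiv 0 \pmod{\mfa}$ entrywise, so each entry of $h(M)$ lies in $\mfa = (aD:b)$; thus $(b/a)h(M) \in M_n(D)$ and $f = (b/a)h \in \Int_K(M_n(D))$. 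The leading coefficient of $f$ is $b/a \notin D$ (because $b \notin aD$), so $f \notin D[X]$, proving $\Int_K(M_n(D))$ is nontrivial.

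The main obstacle is the case in which $R = D/\mfa$ is infinite, which can occur once $D$ is not Noetherian: there $\nil(R)$ need not itself be nilpotent, so one cannot simply take a fixed power of the nilradical. Indeed, in characteristic $p$ one can have $z^p = 0$ for all $z \in \nil(R)$ yet $\nil(R)^k \neq 0$ for every $k$. The resolution, and the precise point at which both of Rush's boundedness hypotheses are consumed, is that one only needs to kill the \emph{finitely many} entries of $\Phi(M)$: the bound $B$ delivers the single annihilating polynomial $\Phi$ via Lemma \ref{BCL lemma}, and the bound $N$ makes the finitely generated nil ideal of entries nilpotent of a uniform index $m$, independent of $M$.
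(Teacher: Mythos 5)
Your proof is correct and follows essentially the same route as the paper: Rush's criterion supplies $a,b$ and the bounded data, Lemma \ref{BCL lemma} pushes $M_n(D)$ into $M_n(\nil(D/\mfa))$ via a finite product of the $\phi_{q,n}$, a uniform power then kills the nilpotent part, and $\frac{b}{a}\Phi(X)^m$ witnesses nontriviality. The only difference is cosmetic: where the paper invokes as a ``standard exercise'' that matrices with nilpotent entries of bounded index are nilpotent of uniformly bounded index, you make this explicit with the pigeonhole bound $m = n^2(N-1)+1$ on powers of the entry ideal.
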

\begin{proof}
Let $n \geq 1$ be fixed. Since $\Int(D)$ is nontrivial, by \cite[Cor. 1.7]{Rush} there exist $a,b\in D$ with $b\notin aD$ such that $\{\;|D/P|\; \mid P \in V((aD:b))\}$ and $\{\nu(x) \mid x\in \nil(D/(aD:b))\}$ are bounded. Let $I = (aD:b)$. Note that, because the former condition holds, each prime ideal containing $I$ is maximal, so the nilradical of $D/I$ is equal to the Jacobson radical of $D/I$.

Let $\{q_1,\ldots,q_s\}=\{\;|D/P|\; \mid P\in V(I)\}$. By Lemma \ref{BCL lemma}, we have $\phi_{q,n}(M_n(D))\subseteq M_n(P)$ for each maximal ideal $P\subset D$  whose residue field has cardinality $q$. Then
\begin{equation*}
g(X)=\prod_{i=1,\ldots,s}\phi_{q_i,n}(X)
\end{equation*}
is a monic polynomial such that $g(M_n(D))\subseteq \prod_{i}M_n(P_i)\subseteq M_n(J)$, where $J=\sqrt{I}$. Considering everything modulo $I$, we have $\overline{g}(M_n(D/I))\subseteq M_n(J/I)$.  

Now, since $\{\nu(x) \mid x\in \nil(D/I)\}$ is bounded, the nilpotency of every element in $J/I$ is bounded by some positive integer $t$. It is a standard exercise that a matrix over a commutative ring with nilpotent entries is a nilpotent matrix. Moreover, it easily follows that the nilpotency of every matrix in $M_n(J/I)$ is bounded by some $m\in\N$, depending only on $t$ and $n$. Hence, $\overline{g}(X)^{m}$ maps every matrix $M_n(D/I)$ to $0$, so that $g(X)^{m}$ maps $M_n(D)$ into $M_n(I)$. Finally, it is now easy to see that the polynomial $\frac{b}{a}\cdot g(X)^m$ is in $\Int_K(M_n(D))$ but not in $D[X]$.
\end{proof}

Combining Lemma \ref{Matrix containment} with Proposition \ref{Nontriv matrix criterion}, we obtain our desired theorem.

\begin{Thm}\label{Nontriv fin gen criterion}
Let $D$ be a domain and let $A$ be $D$-algebra with standard assumptions. Assume that $A$ is an integral $D$-algebra of bounded degree. Then, $\Int_K(A)$ is nontrivial if and only if $\Int(D)$ is nontrivial. In particular, if $A$ is finitely generated as a $D$-module, then $\Int_K(A)$ is nontrivial if and only if $\Int(D)$ is nontrivial.
\end{Thm}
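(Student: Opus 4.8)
The plan is to prove the two directions of the biconditional separately, leveraging the containment chain $D[X] \subseteq \Int_K(A) \subseteq \Int(D)$ together with the two preceding results. The necessity direction is immediate: if $\Int_K(A)$ is nontrivial, meaning $\Int_K(A) \ne D[X]$, then the strict inclusion $D[X] \subsetneq \Int_K(A) \subseteq \Int(D)$ forces $D[X] \subsetneq \Int(D)$, so $\Int(D)$ is nontrivial. No additional machinery is needed for this half.

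For the sufficiency direction, I would exploit the hypothesis that $A$ is an integral $D$-algebra of bounded degree, say of bounded degree $n$. Assuming $\Int(D)$ is nontrivial, Proposition \ref{Nontriv matrix criterion} supplies that $\Int_K(M_n(D))$ is nontrivial, so there is a polynomial $f \in \Int_K(M_n(D))$ with $f \notin D[X]$. Lemma \ref{Matrix containment} then gives $\Int_K(M_n(D)) \subseteq \Int_K(A)$, whence the same $f$ lies in $\Int_K(A) \setminus D[X]$, witnessing that $\Int_K(A)$ is nontrivial. For the ``in particular'' clause, I would invoke the standard fact recorded before Lemma \ref{Matrix containment} that a $D$-algebra finitely generated as a $D$-module is automatically an integral algebra of bounded degree (with the bound given by the number of module generators), so that the general biconditional applies verbatim.

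In truth there is essentially no obstacle at this stage, since all the substantive work has been front-loaded into the two inputs: Lemma \ref{Matrix containment}, which rests on the fact that every element of bounded degree admits a monic annihilator dividing any integer-valued numerator modulo $d$, and Proposition \ref{Nontriv matrix criterion}, which uses the Brawley--Carlitz--Levine polynomials together with Rush's criterion to manufacture a nontrivial integer-valued matrix polynomial. The theorem is then a clean assembly of these two results with the trivial containment. The only point requiring any care is matching the bounding degree $n$ of $A$ with the index $n$ of the matrix algebra $M_n(D)$; this causes no difficulty precisely because Proposition \ref{Nontriv matrix criterion} is stated for \emph{all} $n \ge 1$, so we are free to apply it at the particular $n$ furnished by the bounded-degree hypothesis.
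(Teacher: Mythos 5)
Your proof is correct and is essentially the paper's own argument: the authors likewise dispose of the forward direction via the containment $\Int_K(A) \subseteq \Int(D)$ and prove the converse by combining Proposition \ref{Nontriv matrix criterion} (nontriviality of $\Int_K(M_n(D))$) with Lemma \ref{Matrix containment} (the inclusion $\Int_K(M_n(D)) \subseteq \Int_K(A)$ for an integral algebra of bounded degree $n$), with the ``in particular'' clause following from the cited fact that finite generation as a $D$-module implies integrality of bounded degree.
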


Lemma \ref{Matrix containment} shows that, for an integral algebra $A$ of bounded degree $n$, the following containments hold:
\begin{equation*}
D[X] \subseteq \Int_K(M_n(D)) \subseteq \Int_K(A) \subseteq \Int(D).
\end{equation*}
While our focus has been on whether $\Int_K(A)$ equals $D[X]$, for the remainder of this section we will consider the containment $\Int_K(M_n(D)) \subseteq \Int_K(A)$. In particular, we will examine to what extent $\Int_K(M_n(D))$ is unique among rings of integer-valued polynomials. That is, if $\Int_K(M_n(D)) = \Int_K(A)$, then can we conclude that $A \cong M_n(D)$? In general, the answer is no, as we show below in Example \ref{Quaternion example}. However, in Theorem \ref{Uniqueness of M_n(D)} we will prove that for $D$ Dedekind, if $A$ can be embedded in $M_n(D)$, then having $\Int_K(M_n(D)) = \Int_K(A)$ implies that $A \cong M_n(D)$.

We first recall the definition of a null ideal of an algebra.

\begin{Def}\label{Null ideal}
Let $R$ be a commutative ring and $A$ an $R$-algebra. The \textit{null ideal} of $A$ with respect to $R$, denoted $N_R(A)$, is the set of polynomials in $R[X]$ that kill $A$. That is, $N_R(A) = \{ f \in R[X] \mid f(A) = 0\}$. In particular, $N_{D/P}(A/PA) = \{f \in (D/P)[X] \mid f(A/PA) = 0\}$ denotes the null ideal of $A/PA$ with respect to $D/P$.
\end{Def}

There is a close relationship between polynomials in rings of integer-valued polynomials and polynomials in null ideals.

\begin{Lem}\label{Null ideal lemma}
Let $D$ be a domain and let $A$ and $A'$ be $D$-algebras with standard assumptions.
\begin{enumerate}[(1)]
\item Let $g(X)/d \in K[X]$, where $g \in D[X]$ and $d \ne 0$. Then, $g(X)/d \in \Int_K(A)$ if and only if the residue of $g$ (mod $d$) is in $N_{D/dD}(A/dA)$. 
\item $\Int_K(A) = \Int_K(A')$ if and only if $N_{D/dD}(A/dA) = N_{D/dD}(A'/dA')$ for all $d \in D$.
\end{enumerate}
\end{Lem}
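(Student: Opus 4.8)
The plan is to treat both parts as formal consequences of a single observation: reduction modulo $d$ converts evaluation of a polynomial on $A$ into evaluation of its residue on the $D/dD$-algebra $A/dA$. For part (1), fix $d \in D \setminus \{0\}$, write $\bar g \in (D/dD)[X]$ for the coefficientwise residue of $g$ modulo $d$ (and $\bar c$ for the residue of $c \in D$), and let $\pi \colon A \to A/dA$ be the canonical projection. Since $dD \cdot A \subseteq dA$, the element $d$ annihilates $A/dA$, so $A/dA$ is a $D/dD$-algebra and $\pi$ is a surjective $D$-algebra homomorphism with $\pi(c \cdot 1_A) = \bar c \cdot 1_{A/dA}$. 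Hence evaluation commutes with reduction, i.e. $\pi(g(a)) = \bar g(\pi(a))$ for every $a \in A$. The proof of part (1) is then a chain of equivalences: $g/d \in \Int_K(A)$ means $g(a)/d \in A$, equivalently $g(a) \in dA$, for all $a \in A$; applying $\pi$, this says exactly $\bar g(\pi(a)) = 0$ for all $a$; and since $\pi$ is surjective this is the same as $\bar g(\alpha) = 0$ for every $\alpha \in A/dA$, that is, $\bar g \in N_{D/dD}(A/dA)$.

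For part (2) I would reduce everything to part (1) by clearing denominators. Every $f \in K[X]$ may be written as $f = g/d$ with $g \in D[X]$ and $d \in D \setminus \{0\}$, and conversely the surjection $D[X] \to (D/dD)[X]$ shows that each element of $N_{D/dD}(A/dA)$ is the residue of some $g \in D[X]$. For the ``if'' direction, assume the null ideals agree for every $d$; given $f = g/d \in \Int_K(A)$, part (1) places $\bar g$ in $N_{D/dD}(A/dA) = N_{D/dD}(A'/dA')$, and part (1) applied to $A'$ returns $f \in \Int_K(A')$, whence $\Int_K(A) \subseteq \Int_K(A')$ and, symmetrically, equality. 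For the ``only if'' direction, fix $d$ and a residue $\bar g \in N_{D/dD}(A/dA)$; part (1) gives $g/d \in \Int_K(A) = \Int_K(A')$, and part (1) for $A'$ yields $\bar g \in N_{D/dD}(A'/dA')$, so the two null ideals coincide, again by symmetry.

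The only point demanding care is the range of $d$. Part (1) requires $d \ne 0$ for $g/d$ to be defined, so the genuine content of part (2) lives over nonzero $d$; when $d$ is a unit both $D/dD$ and $A/dA$ vanish and the statement is vacuous, so I would read ``for all $d \in D$'' as ``for all nonzero $d$,'' which is precisely what clearing denominators produces. Beyond this bookkeeping the argument is purely formal, and the one identity that must actually be verified is $\pi(g(a)) = \bar g(\pi(a))$, which holds because $\pi$ is a $D$-algebra map carrying $c \cdot 1_A$ to $\bar c \cdot 1_{A/dA}$. I therefore expect no substantive obstacle: the lemma is essentially a translation device between membership in $\Int_K(A)$ and membership in the null ideals $N_{D/dD}(A/dA)$.
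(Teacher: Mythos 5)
Your proof is correct and follows essentially the same route as the paper, whose entire proof is the one-line observation that $g/d \in \Int_K(A)$ iff $g(A) \subseteq dA$ iff the residue of $g$ kills $A/dA$, with part (2) left as "follows easily." Your write-up simply makes explicit the reduction-commutes-with-evaluation identity and the clearing-of-denominators argument (and your reading of "for all $d \in D$" as "for all nonzero $d$" matches the scope of the paper's proof and of every later application of the lemma).
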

\begin{proof}
Notice that $g \in \Int_K(A)$ if and only if $g(A) \subseteq dA$ if and only if $g(A/dA) = 0$ mod $d$. This proves (1), and (2) follows easily.
\end{proof}

\begin{Ex}\label{Quaternion example}
Let $D = \Z_{(p)}$ be the localization of $\Z$ at an odd prime $p$. Take $A$ to be the quaternion algebra $A = D \oplus D\mathbf{i} \oplus D\mathbf{j} \oplus D\mathbf{k}$, where $\mathbf{i}$, $\mathbf{j}$, and $\mathbf{k}$ are the imaginary  quaternion units satisfying $\mathbf{i}^2 = \mathbf{j}^2 = -1$ and $\mathbf{i}\mathbf{j} = \mathbf{k} = -\mathbf{j}\mathbf{i}$. It is well known (cf.\ \cite[Exercise 3A]{Goodearl} or \cite[Sec. 2.5]{DavSarVal}) that $A/p^k A \cong M_2(\Z/p^k \Z) \cong M_2(D/p^k D)$ for all $k > 0$. By Lemma \ref{Null ideal lemma}, $\Int_\Q(A) = \Int_\Q(M_2(D))$. However, $A$ contains no nonzero nilpotent elements (and is in fact contained in the division ring $\Q \oplus \Q\mathbf{i} \oplus \Q\mathbf{j} \oplus \Q\mathbf{k}$) and so cannot be isomorphic to $M_2(D)$.
\end{Ex}

Thus, in general, $\Int_K(A) = \Int_K(M_n(D))$ does not imply that $A \cong M_n(D)$. However, as mentioned above, we do have such an isomorphism if $A$ can be embedded in $M_n(D)$. Proving this theorem involves some results of Racine \cite{Racine}, \cite{Racine2} about maximal subalgebras of matrix rings, which we now summarize.

\begin{Prop}\label{Racine classification}\mbox{}
\begin{enumerate}[(1)]
\item (\cite[Thm. 1]{Racine}) Let $\olA$ be a maximal $\F_q$-subalgebra of $M_n(\F_q)$. Let $V$ be an $\F_q$-vector space of dimension $n$, so that $M_n(\F_q)\cong{\rm End}_{\F_q}(V)$. Then, $\olA$ is one of the following two types.
\begin{itemize}
\item[(I)] The stabilizer of a proper nonzero subspace of $V$. That is, $\olA = S(W) = \{\varphi\in {\rm End}_{\F_q}(V) \mid \varphi(W)\subseteq W\}$, where $W$ is a proper nonzero $\F_q$-subspace of $V$.

\item[(II)] The centralizer of a minimal field extension of $\F_q$. That is, $\olA = C_{{\rm End}_{\F_q}(V)}(\F_{q^l})=\{\varphi\in{\rm End}_{\F_q}(V) \mid \varphi x=x\varphi, \forall x\in \F_{q^l} \}$, where $l\in\Z$ is a prime dividing $n$.
\end{itemize}

\item (\cite[Theorem p.\ 12]{Racine2}) Let $D$ be a Dedekind domain and let $A$ be a maximal $D$-subalgebra of $M_n(D)$. Then, there exists a maximal ideal $P$ of $D$ such that $A/P A$ is a maximal subalgebra of $M_n(D/P)$. 
\end{enumerate}
\end{Prop}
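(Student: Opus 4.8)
The plan is to prove the two parts of Proposition~\ref{Racine classification} by independent arguments, since they inhabit different settings (a finite field versus a Dedekind domain); both statements are due to Racine, so I will sketch what I regard as the natural proofs. Throughout, a subalgebra is understood to contain the identity.

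For part~(1) I would dichotomize according to how $\olA$ acts on $V$. If $\olA$ does not act irreducibly, I pick a proper nonzero $\olA$-invariant subspace $W$; then $\olA \subseteq S(W) \subsetneq {\rm End}_{\F_q}(V)$, and maximality of $\olA$ forces $\olA = S(W)$, which is type~(I). (One also checks that each $S(W)$ is genuinely maximal: in a basis adapted to $W$ these are the block upper-triangular matrices, and adjoining any element with a nonzero lower-left block generates all of $M_n(\F_q)$.) If instead $\olA$ acts irreducibly, then by Jacobson density together with the Wedderburn structure theorem $\olA$ is a simple algebra, and since finite division rings are fields (Wedderburn's little theorem) we get $\olA \cong M_r(\F_{q^d})$ acting on $V \cong (\F_{q^d})^r$, so that $rd = n$. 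The double centralizer theorem, applied in the central simple algebra ${\rm End}_{\F_q}(V)$, then identifies $\olA$ with $C_{{\rm End}_{\F_q}(V)}(\F_{q^d})$; maximality forces $\F_{q^d}$ to be a minimal extension, i.e.\ $d = l$ is prime, and since $V$ is an $\F_{q^l}$-space we obtain $l \mid n$, which is type~(II).

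For part~(2) I would first reduce to the local case. As $D$ is Dedekind (hence Noetherian), $M_n(D)/A$ is a finitely generated $D$-module, and a subalgebra between $A$ and $M_n(D)$ is determined by its localizations. If the support of $M_n(D)/A$ contained two distinct maximal ideals $P_1, P_2$, I could build a subalgebra $B = M_n(D) \cap \bigcap_{Q \ne P_1} A_Q$ satisfying $B_{P_1} = M_n(D_{P_1})$ and $B_Q = A_Q$ for $Q \ne P_1$, so that $A \subsetneq B \subsetneq M_n(D)$, contradicting maximality. Hence there is a single maximal ideal $P$ with $A_Q = M_n(D_Q)$ for all $Q \ne P$ and $A_P$ maximal in $M_n(D_P)$, where $R := D_P$ is a DVR with maximal ideal $\mathfrak{m} = PR$. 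Next, because $\mathfrak{m}M_n(R)$ is a two-sided ideal, $A_P + \mathfrak{m}M_n(R)$ is again a subalgebra containing $A_P$; were it equal to $M_n(R)$, Nakayama's lemma applied to the finitely generated module $M_n(R)/A_P$ would force $A_P = M_n(R)$, a contradiction, so $\mathfrak{m}M_n(R) \subseteq A_P$. Reduction modulo $\mathfrak{m}$ then gives an inclusion-preserving bijection between subalgebras of $M_n(R)$ containing $A_P$ and subalgebras of $M_n(D/P)$ containing the image of $A$; since $A_P$ is maximal, there are only two of the former, hence only two of the latter, so $A/PA$ (identified with its image) is maximal in $M_n(D/P)$.

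I expect the descent in part~(2) to be the main obstacle. Part~(1) is essentially classical structure theory once the reducible/irreducible split is made, but part~(2) requires genuinely controlling how maximality behaves under both localization and reduction modulo $P$: the single-prime-support step and the Nakayama argument that $\mathfrak{m}M_n(R) \subseteq A_P$ are what make the correspondence between the subalgebras upstairs and the subalgebras of $M_n(D/P)$ exact, and hence what guarantee that maximality is neither lost nor gained in passing to the residue field.
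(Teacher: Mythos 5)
The paper itself gives no proof of this proposition---both parts are quoted verbatim from Racine's papers \cite{Racine}, \cite{Racine2}---so your attempt can only be judged on its own merits. Your part (1) is essentially the standard (and, up to presentation, Racine's) argument: the reducible/irreducible dichotomy, with the reducible case forced into $S(W)$ by maximality, and the irreducible case handled by Jacobson density, Wedderburn's little theorem (the commutant $\Delta=\mathrm{End}_{\olA}(V)$ is a finite field $\F_{q^d}$, $d>1$ by properness), and the double centralizer theorem, which makes $\olA=C_{\mathrm{End}_{\F_q}(V)}(\F_{q^d})$ and forces $d$ prime. That sketch is sound.

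Part (2) has a genuine gap in the reduction to the local case. The construction of $B=M_n(D)\cap\bigcap_{Q\ne P_1}A_Q$ with prescribed localizations $B_{P_1}=M_n(D_{P_1})$ and $B_Q=A_Q$ is the standard theorem on lattices over a Dedekind domain, and that theorem applies only when the prescribed family agrees with one full lattice at almost all primes; concretely, you need $A_Q=M_n(D_Q)$ for all but finitely many $Q$, equivalently that $M_n(D)/A$ is a torsion module, equivalently that $KA=M_n(K)$. You never establish this ``fullness'' of $A$, and it is not automatic for a proper subalgebra (the upper-triangular algebra $T_n(K)\cap M_n(D)$ is proper and not full). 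Worse, if a maximal subalgebra $A$ with $\mathcal{A}:=KA\subsetneq M_n(K)$ existed, your construction would detect nothing: maximality forces $A=\mathcal{A}\cap M_n(D)$ (that intersection is a proper subalgebra containing $A$), and since $A_Q\subseteq\mathcal{A}$ for every $Q$, we get $B\subseteq\mathcal{A}\cap M_n(D)=A$, so $B=A$ and no contradiction arises---even though in this situation the support of $M_n(D)/A$ contains every maximal ideal. So the case analysis on the support, as written, does not close, and the subsequent claim ``there is a single maximal ideal $P$ with $A_Q=M_n(D_Q)$ for $Q\ne P$'' is unproven.

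The gap is fixable, and the fix makes the localization detour unnecessary: run your own Nakayama step globally. For each maximal ideal $P$, $A+PM_n(D)$ is a subalgebra containing $A$ (as $PM_n(D)$ is a two-sided ideal). If $A+PM_n(D)=M_n(D)$ for \emph{every} maximal $P$, then the finitely generated module $M=M_n(D)/A$ satisfies $M=PM$ for every $P$; Nakayama (determinant trick) yields an element of $1+P$ annihilating $M$ for each $P$, so $\mathrm{Ann}_D(M)$ lies in no maximal ideal, hence $M=0$ and $A=M_n(D)$, a contradiction. Therefore $A\subseteq A+PM_n(D)\subsetneq M_n(D)$ for some maximal $P$, and maximality of $A$ gives $PM_n(D)\subseteq A$. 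Then subalgebras of $M_n(D)$ containing $A$ correspond bijectively to subalgebras of $M_n(D/P)$ containing the image of $A$, so that image is maximal---which is the assertion, with $A/PA$ read as its image in $M_n(D/P)$, exactly as in your parenthetical. Note that this argument needs only that $M_n(D)/A$ is finitely generated (so it works over any Noetherian $D$, not just Dedekind), and it establishes fullness of $A$ as a byproduct rather than assuming it.
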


Racine's classification allows us to establish a partial uniqueness result for the null ideal of $M_n(\F_q)$, and hence for $\Int_K(M_n(D))$.

\begin{Lem}\label{FqsubalgebrasMnFq}
Let $\olA$ be an $\F_q$-subalgebra of $M_n(\F_q)$ such that $N_{\F_q}(\olA)=N_{\F_q}(M_n(\F_q))$. Then $\olA = M_n(\F_q)$.
\end{Lem}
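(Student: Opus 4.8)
The plan is to argue by contradiction, using Racine's classification to reduce to two explicit computations with the polynomials $\phi_{q,n}$. We may assume $n \geq 2$ (for $n=1$ the only $\F_q$-subalgebra is $M_1(\F_q) = \F_q$ itself). Since $M_n(\F_q)$ is finite-dimensional, if $\olA \ne M_n(\F_q)$ then $\olA$ is contained in some maximal $\F_q$-subalgebra $\overline{B}$. Because $\olA \subseteq \overline{B} \subseteq M_n(\F_q)$, we get $N_{\F_q}(M_n(\F_q)) \subseteq N_{\F_q}(\overline{B}) \subseteq N_{\F_q}(\olA) = N_{\F_q}(M_n(\F_q))$, so it suffices to produce, for each maximal subalgebra $\overline{B}$, a polynomial $f \in \F_q[X]$ that kills $\overline{B}$ but does not kill all of $M_n(\F_q)$; this contradicts the hypothesis. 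By Proposition \ref{Racine classification}(1), $\overline{B}$ is of type (I) or (II). I will use two standard facts: a polynomial $f$ annihilates a matrix $M$ if and only if the minimal polynomial of $M$ divides $f$; and $X^{q^j}-X$ is the product of all monic irreducibles over $\F_q$ of degree dividing $j$, so a monic irreducible $p$ of degree $d$ occurs in $\phi_{q,n} = \prod_{i=1}^{n}(X^{q^i}-X)$ with multiplicity $\#\{1 \le i \le n : d \mid i\} = \lfloor n/d\rfloor$.

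\textbf{Type (I).} Here $\overline{B} = S(W)$ for a subspace $W$ with $0 < m := \dim W < n$. In a basis adapted to $W$, every $T \in S(W)$ is block upper-triangular, $T = \bigl(\begin{smallmatrix} A & B \\ 0 & C\end{smallmatrix}\bigr)$ with $A \in M_m(\F_q)$ and $C \in M_{n-m}(\F_q)$. A direct computation shows $m_A(T)\,m_C(T) = 0$ (the two block-triangular factors multiply to the zero matrix), so the minimal polynomial of $T$ divides $m_A m_C$, and hence divides $\phi_{q,m}\phi_{q,n-m}$ since $m_A \mid \phi_{q,m}$ and $m_C \mid \phi_{q,n-m}$ by Lemma \ref{BCL lemma}. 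Thus $f := \phi_{q,m}\phi_{q,n-m} \in N_{\F_q}(S(W))$. To see $f \notin N_{\F_q}(M_n(\F_q))$, fix a monic irreducible $p$ of degree $n$ (which exists over every $\F_q$) and let $M$ be its companion matrix, so $m_M = p$; the multiplicity of $p$ in $f$ is $\lfloor m/n\rfloor + \lfloor (n-m)/n\rfloor = 0$, whence $p \nmid f$ and $f(M) \ne 0$.

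\textbf{Type (II).} Here $\overline{B}$ is the centralizer of $\F_{q^l}$ for a prime $l \mid n$, so $\overline{B} \cong M_{n/l}(\F_{q^l})$ as $\F_{q^l}$-algebras, with $\F_q \subseteq \F_{q^l}$ acting as scalars. The coefficients of $\phi_{q^l,\,n/l}(X) = \prod_{i=1}^{n/l}(X^{q^{li}}-X)$ lie in the prime field, so $f := \phi_{q^l,\,n/l} \in \F_q[X]$, and by Lemma \ref{BCL lemma} applied over $\F_{q^l}$ it kills $M_{n/l}(\F_{q^l}) \cong \overline{B}$. (Evaluating an $\F_q$-coefficient polynomial at an $\F_{q^l}$-linear map gives the same endomorphism computed over $\F_q$ or over $\F_{q^l}$, so this genuinely places $f$ in $N_{\F_q}(\overline{B})$.) To see $f \notin N_{\F_q}(M_n(\F_q))$, compare multiplicities of $p = X$: each factor $X^{q^{li}}-X$ contributes exactly one factor of $X$, so the multiplicity of $X$ in $f$ is $n/l$; taking $M$ a single nilpotent Jordan block of size $n$ (so $m_M = X^n$) and using $l \ge 2$, we have $n/l < n$, hence $X^n \nmid f$ and $f(M) \ne 0$.

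In either case we obtain a polynomial in $N_{\F_q}(\olA)$ that is not in $N_{\F_q}(M_n(\F_q))$, the desired contradiction; therefore $\olA = M_n(\F_q)$. The step I expect to demand the most care is the type (I) identity $m_A(T)\,m_C(T)=0$ for block-triangular $T$, together with the bookkeeping of multiplicities of irreducibles across the various $\phi$'s; a secondary subtlety is verifying in type (II) that polynomial evaluation is insensitive to the base field, so that a Brawley--Carlitz--Levine polynomial constructed over the larger field $\F_{q^l}$ legitimately lands in the null ideal taken over $\F_q$.
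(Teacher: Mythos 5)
Your proof is correct, and its skeleton — contradiction, passage to a maximal subalgebra, and Racine's two types — is exactly the paper's; the genuine difference is in how the separating polynomial is produced for Type I. There the paper does not use $\phi_{q,m}\phi_{q,n-m}$: it observes that every matrix in $S(W)$ has \emph{reducible} characteristic polynomial, invokes the fact (from the proof of the Brawley--Carlitz--Levine theorem) that $\phi_{q,n}$ is the least common multiple of all monic degree-$n$ polynomials over $\F_q$, and takes as witness the quotient $\phi_{q,n}/p$ with $p$ irreducible of degree $n$: this kills $S(W)$ because a reducible monic polynomial of degree $n$ is coprime to $p$, hence still divides $\phi_{q,n}/p$, while it fails to kill the companion matrix of $p$. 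Your witness $\phi_{q,m}\phi_{q,n-m}$ and the identity $m_A(T)\,m_C(T)=0$ (which is correct: the two block-triangular factors have zero blocks in complementary corners, so the product vanishes) replace the lcm fact and Cayley--Hamilton with a direct computation on minimal polynomials, at the cost of the multiplicity bookkeeping $\lfloor m/n\rfloor+\lfloor (n-m)/n\rfloor=0$; both witnesses work, and yours has the small advantage of not relying on a fact buried inside the proof of \cite[Thm. 3]{BrawCarLev}. For Type II your argument coincides with the paper's: both use $\olA\cong M_{n/l}(\F_{q^l})$ and the polynomial $\phi_{q^l,n/l}$; the paper simply asserts the strict ideal containment $(\phi_{q^l,n/l}(X))\supsetneq(\phi_{q,n}(X))$ (clear, since the factors of $\phi_{q^l,n/l}$ are precisely those factors $X^{q^i}-X$ of $\phi_{q,n}$ with $l\mid i$), whereas you certify non-membership explicitly via the nilpotent Jordan block; your care about the coefficients of $\phi_{q^l,n/l}$ lying in the prime field, so that evaluation is insensitive to the base-field identification, is a point the paper leaves implicit.
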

\begin{proof}
Suppose the claim is not true, so that $\overline A$ is contained in a maximal $\F_q$-subalgebra of $M_n(\F_q)$; hence, without loss of generality, we may assume that $\overline A\subsetneq M_n(\F_q)$ is a maximal $\F_q$-subalgebra. We will show that $N_{\F_q}(\overline{A})$ properly contains $N_{\F_q}(M_n(\F_q))$. Note that $N_{\F_q}(M_n(\F_q)) = (\phi_{q,n}(X))$ by \cite[Thm. 3]{BrawCarLev}, where $\phi_{q,n}$ is the polynomial from Definition \ref{BCL polynomials}.

Let $V$ be an $\F_q$-vector space of dimension $n$, so that $M_n(\F_q)\cong{\rm End}_{\F_q}(V)$. Assume first that $\olA = S(W)$ is of Type I as in Proposition \ref{Racine classification}, and let $m = \dim_{\F_q}(W)$. Note that conjugating $\olA$ by an element of $GL(n, q)$ will change the matrices in $\olA$, but not the polynomials in the null ideal $N_{\F_q}(\olA)$. Moreover, up to conjugacy by an element in $GL(n, q)$, we may assume that $W$ has basis $e_1, e_2, \ldots, e_m$, where $e_i$ is the standard basis vector with $1$ in the $i^\text{th}$ component and 0 elsewhere. Under this basis, the matrices in $\olA$ are block matrices of the form 
$\big(\begin{smallmatrix}
A_1 & A_2 \\ 0 & A_3
\end{smallmatrix}\big)$ 
where $A_1$ is $m \times m$ and $A_3$ is $(n-m) \times (n-m)$. 

One consequence of this representation is that every matrix in $S(W)$ has a reducible characteristic polynomial. As shown in the proof of \cite[Thm. 3]{BrawCarLev}, $\phi_{q, n}$ is the least common multiple of all monic polynomials in $\F_q[X]$ of degree $n$. Hence, $\phi_{q, n} \in N_{\F_q}(\overline{A})$, because the characteristic polynomial of each matrix in $\olA$ divides $\phi_{q,n}$. However, if $\phi$ is the quotient of $\phi_{q,n}$ by an irreducible polynomial in $\F_q[X]$ of degree $n$, then $\phi \in N_{\F_q}(\olA)$, but $\phi \notin N_{\F_q}(M_n(\F_q))$. Thus, $N_{\F_q}(\olA)$ properly contains $N_{\F_q}(M_n(\F_q))$.

Now, assume that $\olA$ is of Type II of Proposition \ref{Racine classification}, so that $\olA = C_{{\rm End}_{\F_q}(V)}(\F_{q^l})$ for some prime $l$ dividing $n$. Then, by \cite[Thm. VIII.10]{McD}, we have $\olA \cong M_{n/l}(\F_{q^l})$, and so 
\begin{equation*}
N_{\F_q}(\olA)=(\phi_{q^l,n/l}(X))\supsetneq (\phi_{q,n}(X)) = N_{\F_q}(M_n(\F_q)).
\end{equation*}
As before, the null ideal of $\olA$ strictly contains the null ideal of $M_n(\F_q)$.
\end{proof}

\begin{Thm}\label{Uniqueness of M_n(D)}
Let $D$ be a Dedekind domain with finite residue fields. Let $A$ be a $D$-algebra of finite type with standard assumptions. Assume that $n \geq 1$ is such that $A$ can be embedded in $M_n(D)$. Then, $\Int_K(A) = \Int_K(M_n(D))$ if and only if $A \cong M_n(D)$.
\end{Thm}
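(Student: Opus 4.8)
The ``if'' direction is immediate, since a $D$-algebra isomorphism $A \cong M_n(D)$ induces $\Int_K(A) = \Int_K(M_n(D))$. So I would assume $\Int_K(A) = \Int_K(M_n(D))$, fix the embedding and regard $A$ as a $D$-subalgebra of $M_n(D)$, and suppose toward a contradiction that $A \ne M_n(D)$. The first step is to reduce to a \emph{maximal} subalgebra. Since $D$ is Dedekind, hence Noetherian, and $M_n(D)$ is a finitely generated $D$-module, every ascending chain of $D$-subalgebras stabilizes, so $A$ lies in some maximal proper $D$-subalgebra $A'$ of $M_n(D)$. From $A \subseteq A' \subseteq M_n(D)$ and the hypothesis one gets the sandwich
\[
\Int_K(M_n(D)) \subseteq \Int_K(A') \subseteq \Int_K(A) = \Int_K(M_n(D)),
\]
whence $\Int_K(A') = \Int_K(M_n(D))$. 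By Proposition \ref{Racine classification}(2) there is a maximal ideal $P$, with finite residue field $\F_q = D/P$, such that the image $\overline{A'}$ of $A'$ in $M_n(D/P) = M_n(\F_q)$ is a maximal (hence proper) $\F_q$-subalgebra. Moreover the full preimage $\widetilde{A}$ of $\overline{A'}$ in $M_n(D)$ is a proper $D$-subalgebra containing $A'$, so maximality gives $A' = \widetilde{A}$; in particular $A' \supseteq PM_n(D)$ and $A'/PM_n(D) = \overline{A'}$.

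The second step is to convert the hypothesis into a statement about null ideals modulo $P$. By Lemma \ref{Null ideal lemma}, $\Int_K(A') = \Int_K(M_n(D))$ is equivalent to $N_{D/dD}(A'/dA') = N_{D/dD}(M_n(D/dD))$ for all $d$. Choosing $d$ with $v_P(d) = 1$ and decomposing $D/dD$ by the Chinese Remainder Theorem into its primary factors (equivalently, localizing at $P$, where $P$ becomes principal), the $P$-component of this identity reads
\[
N_{\F_q}(A'/PA') = N_{\F_q}(M_n(\F_q)) = (\phi_{q,n}),
\]
the last equality being \cite[Thm. 3]{BrawCarLev}, as recorded in Lemma \ref{FqsubalgebrasMnFq}. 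Here $A'/PA'$ is the \emph{abstract} reduction; because $A' \supseteq PM_n(D)$, the surjection $A'/PA' \twoheadrightarrow \overline{A'}$ has kernel $\mathfrak{n} = PM_n(D)/PA'$ with $\mathfrak{n}^2 = 0$, so $A'/PA'$ is a square-zero extension of the maximal subalgebra $\overline{A'}$.

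The final step derives a contradiction by comparing $N_{\F_q}(A'/PA')$ with $(\phi_{q,n})$. Since $\mathfrak{n}$ is a nilpotent ideal, the minimal polynomial of any $x \in A'/PA'$ and that of its image in $\overline{A'}$ have the same irreducible factors; taking the least common multiple over all elements shows that $N_{\F_q}(A'/PA')$ and $N_{\F_q}(\overline{A'})$ have the same radical. By Lemma \ref{FqsubalgebrasMnFq} and the explicit generators exhibited in its proof, $N_{\F_q}(\overline{A'})$ is a proper divisor of $\phi_{q,n}$: in Type I its radical omits the irreducibles of degree $n$, while in Type II (where $\overline{A'} \cong M_{n/l}(\F_{q^l})$) it omits, for $n \ge 3$, the irreducibles of degree $n-1$ (as $\mathrm{lcm}(n-1,l) = l(n-1) > n$). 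In each such case the radical of $N_{\F_q}(A'/PA')$ is strictly smaller than that of $\phi_{q,n}$, so $N_{\F_q}(A'/PA') \ne (\phi_{q,n})$, contradicting the displayed identity.

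I expect the main obstacle to be the single remaining case $n = 2$ with $\overline{A'}$ a maximal subfield $\F_{q^2}$, where the two radicals coincide. Here the argument must instead use multiplicities: one shows the square-zero extension $A'/PA'$ is \emph{non-split}, so that some lift of a generator of $\F_{q^2}$ acquires minimal polynomial equal to the \emph{square} of the relevant degree-$2$ irreducible. Concretely, for a lift $\Theta$ of a generator with minimal polynomial $p$, the class of $p(\Theta)$ in $\mathfrak{n}$ is governed by the $\F_q$-linear map $\overline{N} \mapsto \overline{N}\theta - \theta^q\overline{N}$ on $M_2(\F_q)$, which does not land inside $\F_{q^2}$; hence $p(\Theta) \ne 0$ in $A'/PA'$ and the multiplicity of that irreducible factor in $N_{\F_q}(A'/PA')$ exceeds its multiplicity $1$ in $\phi_{q,n}$. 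This again yields $N_{\F_q}(A'/PA') \ne (\phi_{q,n})$, completing the contradiction and forcing $A = M_n(D)$.
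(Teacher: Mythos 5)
Your proposal follows the same skeleton as the paper's proof (reduce to a maximal $D$-subalgebra $A'$ of $M_n(D)$, invoke Proposition \ref{Racine classification}(2) to get a prime $P$ where the reduction is a maximal subalgebra of $M_n(\F_q)$, then contradict the equality of null ideals modulo $P$), but there are two genuine gaps. The first is the sandwich: the containment $\Int_K(A') \subseteq \Int_K(A)$ for $A \subseteq A'$ is asserted without proof, and it is not a formal consequence of the definitions --- integer-valued polynomial rings do not reverse inclusions. For $f \in \Int_K(A')$ and $a \in A$ one only gets $f(a) \in A' \cap K[a]$, which lies in $A$ when $A$ is saturated in $A'$ (i.e.\ $A = A' \cap (K \otimes_D A)$) but not in general. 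So the hypothesis does not transfer to $A'$ as claimed. (The paper's own reduction, ``we may assume the image of $A$ is maximal,'' is equally terse; a genuine repair is to keep $A$ and note that the kernel of $A/PA \to A'/PA'$ consists of nilpotents of bounded nilpotency, so the two null ideals have the same radical --- which is all your Step 3 actually uses.)

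The second gap is fatal to the route as proposed: in the remaining case $n=2$, Type II, no contradiction exists modulo $P$ at all, and your suggested fix is incorrect. The square-zero extension $0 \to \mathfrak{n} \to A'/PA' \to \F_{q^2} \to 0$ \emph{always} splits (Wedderburn--Malcev, since $\F_{q^2}$ is separable), and $\mathfrak{n} \cong M_2(\F_q)/\F_{q^2}$ is the $\sigma$-twisted bimodule, so $A'/PA' \cong \F_{q^2} \oplus \F_{q^2}\epsilon$ with $\epsilon^2 = 0$, $\epsilon s = s^q \epsilon$. For a lift $\Theta = s + v\epsilon$ of a generator $s$ with minimal polynomial $p$, one computes $g(\Theta) = g(s) + v\,\frac{g(s)-g(s^q)}{s-s^q}\,\epsilon$, and since $p$ kills \emph{both} conjugates $s$ and $s^q$, we get $p(\Theta)=0$: minimal polynomials do not square, contrary to your claim. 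Pushing this computation through gives $N_{\F_q}(A'/PA') = (\phi_{q,2})$ \emph{exactly} (concretely, take $A' = W + \pi M_2(D_P)$ with $W$ the unramified quadratic order), so the mod-$P$ null ideals of $A'$ and $M_2(D)$ genuinely coincide and your displayed identity yields no contradiction. The theorem is still true for such $A'$, but it can only be detected modulo $P^2$: for instance $g = (X^{q^2}-X)^2(X^q-X)$ kills $A'/P^2A'$ but fails to kill $M_2(D/P^2)$ (it fails on a root of $X^2 - \pi$). Your distinction between the abstract reduction $A'/PA'$ and its image $\overline{A'}$ --- the very thing that exposes this problem --- is a point where your analysis is more careful than the paper's write-up, which applies Lemma \ref{FqsubalgebrasMnFq} to $A/PA$ directly; but as it stands the proposal does not prove the theorem, and any completion of this approach must use null ideals modulo higher powers of $P$.
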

\begin{proof}
Clearly, $A \cong M_n(D)$ implies that $\Int_K(A) = \Int_K(M_n(D))$. So, assume that $\Int_K(M_n(D)) = \Int_K(A)$. As we will prove shortly in Lemma \ref{wellbehaviourlocalization}, $\Int_K(A)$ (and likewise $\Int_K(M_n(D))$) is well-behaved with respect to localization at primes of $D$: for each prime $P$ of $D$, we have $\Int_K(A)_P = \Int_K(A_P)$. Hence, $\Int_K(M_n(D_P)) = \Int_K(A_P)$ for each $P$. Since $D$ is Dedekind, $D_P$ is a discrete valuation ring, so there exists $\pi \in D_P$ such that $PD_P = \pi D_P$. Moreover, we have $D_P/\pi D_P \cong D/P$ and $A_P / \pi A_P \cong A/PA$, so that $N_{D_P/\pi D_P}(A_P / \pi A_P) = N_{D/P}(A/PA)$ (and likewise for $M_n(D)$). By Lemma \ref{Null ideal lemma} (2), we conclude that the null ideals $N_{D/P}(M_n(D/P))$ and $N_{D/P}(A/P A)$ are equal for all maximal ideals $P$ of $D$.

Now, suppose by way of contradiction that the image of $A$ in $M_n(D)$ does not equal $M_n(D)$. As in Lemma \ref{FqsubalgebrasMnFq}, we may assume that the image of $A$ in $M_n(D)$ is a maximal $D$-subalgebra of $M_n(D)$. By Proposition \ref{Racine classification}, there exists a maximal ideal $P$ of $D$ such that $A/P A$ is isomorphic to a maximal subalgebra of $M_n(D/P)$. By Lemma \ref{FqsubalgebrasMnFq}, the null ideals $N_{D/P}(A/P A)$ and $N_{D/P}(M_n(D/P))$ are not equal. This is a contradiction. Therefore, $A \cong M_n(D)$.
\end{proof}

\section{General Case}\label{General case section}

We return now to the study of when $\Int_K(A)$ is nontrivial. Because of Theorem \ref{Nontriv fin gen criterion}, $A$ being an integral $D$-algebra of bounded degree can be sufficient for $\Int_K(A)$ to be nontrivial, but it is not necessary. There exist $D$-algebras $A$ that are neither finitely generated, nor algebraic over $D$ (let alone integral or of bounded degree), but for which $\Int_K(A)$ is nontrivial, as the next example shows.

\begin{Ex}\label{integralalgebraboundeddegree not necessary}
Let $D = \Z$ and let $A = \prod_{i \in \N} \Z$ be an infinite direct product of copies of $\Z$. Then, the element $(1, 2, 3, \ldots)$ cannot be killed by any polynomial in $\Z[X]$, so $A$ is not algebraic over $\Z$. However, since operations in $A$ are done component-wise, any polynomial in $\Int(\Z)$ is also in $\Int_\Q(A)$. Hence, $\Int_\Q(A) = \Int(\Z)$, so in particular $\Int_\Q(A)$ is nontrivial. 
\end{Ex}

Ultimately, the previous example works because for each prime $p$ there exists a polynomial that sends each element of $A/pA$ to 0. More explicitly, each element of $\prod_{i \in \N} \F_p$ is killed by the polynomial $X^p-X$. This suggests that for $\Int_K(A)$ to be nontrivial, it may be enough that there exists a finite index prime $P$ of $D$ with $A/PA$ algebraic of bounded degree over $D/P$ (since $D/P$ is a field in this case, this is equivalent to having $A/PA$ be integral of bounded degree over $D/P$). We will prove below in Theorem \ref{criterion} that if $D$ is a Dedekind domain, then this condition is necessary and sufficient for $\Int_K(A)$ to be nontrivial.

Our work will involve localizing $D$, $A$, and $\Int_K(A)$ at $P$, and exploiting properties of $D_P$. In \cite[Prop. 3.2]{Wer}, it is shown that if $D$ is Noetherian and $A$ is a free $D$-module of finite rank, then $\Int_K(A)_P = \Int_K(A_P)$ (in fact, \cite[Prop. 3.2]{Wer} will hold if $A$ is merely finitely generated as a $D$-module). The next lemma shows that we can drop this finiteness assumption if $D$ is Dedekind.

\begin{Lem}\label{wellbehaviourlocalization}
Let $D$ be a Dedekind domain and $A$ a $D$-algebra with standard assumptions. Let $P$ be a prime ideal of $D$. Then $\Int_K(A_P) = \Int_K(A)_P$.
\end{Lem}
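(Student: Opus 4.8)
The plan is to prove the two inclusions separately, first reducing to the case where $P$ is a nonzero (hence maximal) prime: if $P = 0$ then $D_P = K$ and $A_P = B$, and both sides are visibly $K[X]$, so nothing is to prove. For the main case I would set $R = D_P$, a DVR with uniformizer $\pi$ (so $PR = \pi R$) and valuation $v_P$; since $D$ is Dedekind, every localization $D_Q$ at a nonzero prime $Q$ is also a DVR, with valuation $v_Q$, and $x \in D_Q$ is equivalent to $v_Q(x) \geq 0$.

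For the inclusion $\Int_K(A)_P \subseteq \Int_K(A_P)$, it suffices to show $\Int_K(A) \subseteq \Int_K(A_P)$, because $\Int_K(A_P)$ is a $D_P$-module and is therefore already closed under multiplication by each $1/s$ with $s \in D \setminus P$. The key point I would use is that reduction modulo a power of $P$ is insensitive to localization: since $P$ is maximal, $D_P/\pi^v D_P \cong D/P^v$, and base-changing gives $A_P/\pi^v A_P \cong A/P^v A$, so the natural map $A \to A_P/\pi^v A_P$ is surjective. Given $f \in \Int_K(A)$, I would choose $v \geq 0$ with $G := \pi^v f \in D_P[X]$, so that $G(a) = \pi^v f(a) \in \pi^v A$ for every $a \in A$. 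For an arbitrary $\alpha \in A_P$, pick $a \in A$ with $\alpha \equiv a \pmod{\pi^v A_P}$; since the coefficients of $G$ are central and $\pi^v A_P$ is a two-sided ideal, $G(\alpha) \equiv G(a) \equiv 0 \pmod{\pi^v A_P}$, whence $f(\alpha) = G(\alpha)/\pi^v \in A_P$. Thus $f \in \Int_K(A_P)$.

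The reverse inclusion $\Int_K(A_P) \subseteq \Int_K(A)_P$ is the substantive one, and I expect the main obstacle to be that, with $A$ not assumed finitely generated, there is no a priori \emph{uniform} denominator clearing $f(A)$ into $A$. The device I would use to remove this obstacle is the intersection formula $A = \bigcap_Q A_Q$, the intersection taken inside $B$ over all nonzero primes $Q$. This holds for $A$ as a torsion-free $D$-module by a conductor argument requiring no finiteness: if $x \in \bigcap_Q A_Q$, then the ideal $(A :_D x) = \{d \in D \mid dx \in A\}$ meets $D \setminus Q$ for every $Q$, hence is contained in no maximal ideal, hence equals $D$, so $x \in A$. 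Given this, I would take $f \in \Int_K(A_P)$, write $f = g/c$ with $g \in D[X]$ and $c \in D \setminus \{0\}$, and choose $s \in D \setminus P$ with $v_Q(s) \geq v_Q(c)$ for the finitely many primes $Q \neq P$ dividing $(c)$; such $s$ exists because the ideal $\prod_{Q \neq P} Q^{v_Q(c)}$ is comaximal with $P$. Then for every $a \in A$: at $P$ the element $s$ is a unit and $f(a) \in A_P$, so $sf(a) \in A_P$; while at each $Q \neq P$ we have $v_Q(s/c) \geq 0$, so $s/c \in D_Q$, and since $g(a) \in A \subseteq A_Q$ we get $sf(a) = (s/c)\,g(a) \in A_Q$. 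Hence $sf(a) \in \bigcap_Q A_Q = A$ for all $a$, so $sf \in \Int_K(A)$ and $f = (sf)/s \in \Int_K(A)_P$. Combining the two inclusions would give the claimed equality; the only genuinely Dedekind-specific inputs are that the $D_Q$ are valuation rings and that nonzero primes are maximal, whereas the crucial intersection formula holds for any torsion-free module.
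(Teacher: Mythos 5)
Your proof is correct, and it splits into two halves that compare differently with the paper's. For the substantive inclusion $\Int_K(A_P) \subseteq \Int_K(A)_P$ you do essentially what the paper does: the paper factors $dD = P^aI$ with $I$ coprime to $P$ and picks $c \in I \setminus P$, which is exactly your $s \in \prod_{Q \neq P} Q^{v_Q(c)}$ chosen outside $P$; both arguments then conclude via $A = \bigcap_Q A_Q$ (you additionally supply the conductor-ideal justification of this intersection formula, which the paper invokes without proof — a worthwhile addition, since $A$ is not finitely generated). Where you genuinely diverge is the inclusion $\Int_K(A)_P \subseteq \Int_K(A_P)$: the paper does not argue this directly but cites \cite[Prop. 3.2]{Wer}, an induction on polynomial degree adapted from Rush, whereas you give a self-contained argument from the isomorphism $A/P^vA \cong A_P/\pi^v A_P$ (so that $A$ surjects onto $A_P/\pi^v A_P$) together with the congruence $G(\alpha) \equiv G(a) \pmod{\pi^v A_P}$, valid because the coefficients of $G$ are central and $\pi^v A_P$ is a two-sided ideal. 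This is precisely the device the paper itself uses later, in Lemma \ref{DVR lemma}, to show $\Int_K(\whA) = \Int_K(A)$ over a DVR; your observation that the same density-modulo-$\pi^v$ trick already proves the localization inclusion makes the lemma independent of the external citation, at the mild cost of invoking maximality of the nonzero prime $P$ (automatic for $D$ Dedekind, and you correctly dispose of $P = (0)$ separately). Both routes are sound: the paper's citation keeps the proof short, while yours is entirely self-contained and exposes that the easy inclusion is really a statement about $A$ being dense in $A_P$ with respect to the $P$-adic filtration.
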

\begin{proof}
The containment $\Int_K(A)_P \subseteq \Int_K(A_P)$ follows from the proof of \cite[Prop. 3.2]{Wer}, which itself is an adaptation of a technique of Rush involving induction on the degrees of the relevant polynomials (see \cite[Thm. I.2.1]{CaCh} or \cite[Prop. 1.4]{Rush}). 

For the other inclusion, let $f \in \Int_K(A_P)$ and write $f(X)=\frac{g(X)}{d}$ for some $g \in D[X]$ and $d\in D \setminus \{0\}$. Since $D$ is Dedekind, we may write $dD=P^aI$, where $a \geq 0$ and $I$ is an ideal of $D$ coprime with $P$ (possibly equal to $D$ itself). If $a=0$ then $f \in D_P[X] \subseteq \Int_K(A)_P$. If $a \geq 1$, let $c\in I \setminus P$. We claim that $cf \in\Int_K(A)$, from which the statement follows since $c \in D \setminus P$. 

If $Q \subset D$ is a prime ideal different from $P$, then $cf \in D_Q[X] \subseteq \Int_K(A_Q)$; that is, $cf(A_Q) \subset A_Q$. Now, $f(A) \subseteq f(A_P) \subseteq A_P$ by assumption, so $cf(A) \subset cA_P = A_P$, since $c \notin P$. Since $A=\bigcap_{Q\in{\rm Spec}(D)} A_Q$, it follows that $cf(A)\subset A$, and we are done.
\end{proof}


Recall (Definition \ref{Null ideal}) that the null ideal of $A$ in $R$ is $N_R(A) = \{ f \in R[X] \mid f(A) = 0\}$.

\begin{Prop}\label{equivalent conditions}
Let $D$ be a Dedekind domain and $A$ a $D$-algebra with standard assumptions. Let $P$ be a prime ideal of $D$. Then, the following are equivalent.
\begin{enumerate}[(1)]
\item $N_{D/P}(A/PA) \supsetneq (0)$.
\item $D_P[X] \subsetneq \Int_K(A_P)$.
\item $D/P$ is finite and $A/PA$ is a $D/P$-algebraic algebra of bounded degree.
\end{enumerate}
\end{Prop}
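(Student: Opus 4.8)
The plan is to establish the two equivalences $(1)\Leftrightarrow(2)$ and $(1)\Leftrightarrow(3)$, working throughout over the localization $D_P$. I may assume $P$ is a nonzero, hence maximal, prime (the zero ideal is a degenerate case I would set aside separately). Then $D_P$ is a DVR with uniformizer $\pi$ satisfying $PD_P=\pi D_P$. As in the proof of Theorem \ref{Uniqueness of M_n(D)}, we have $D_P/\pi D_P\cong D/P$ and $A_P/\pi A_P\cong A/PA$, so that $N_{D/P}(A/PA)=N_{D_P/\pi D_P}(A_P/\pi A_P)$. A preliminary point is that the pair $(D_P,A_P)$ again satisfies the standard assumptions: localizing $A\cap K=D$ gives $A_P\cap K=D_P$, and $A_P$ is torsion-free, so every earlier lemma applies verbatim to $(D_P,A_P)$.

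For $(1)\Leftrightarrow(2)$ I would apply Lemma \ref{Null ideal lemma}(1) over the DVR $D_P$. For $(1)\Rightarrow(2)$, lift a nonzero $\bar g\in N_{D/P}(A/PA)$ to $g\in D_P[X]$ having at least one unit coefficient; then $g(A_P)\subseteq\pi A_P$, so $g(X)/\pi\in\Int_K(A_P)$ while $g(X)/\pi\notin D_P[X]$, proving strict containment. Conversely, for $(2)\Rightarrow(1)$, take any $f\in\Int_K(A_P)\setminus D_P[X]$ and use the valuation on $D_P$ to write it in lowest terms as $f=g(X)/\pi^a$ with $a\ge 1$ and $g\notin\pi D_P[X]$; since $g(A_P)\subseteq\pi^a A_P\subseteq\pi A_P$, the reduction $\bar g$ is a nonzero element of $N_{D_P/\pi D_P}(A_P/\pi A_P)=N_{D/P}(A/PA)$.

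For $(3)\Rightarrow(1)$, write $D/P\cong\F_q$ and let $m$ bound the degree of the minimal polynomial of each element of $A/PA$. The key fact, noted in the proof of Lemma \ref{FqsubalgebrasMnFq}, is that $\phi_{q,m}$ is the least common multiple of the monic polynomials of degree $m$ in $\F_q[X]$, and hence is divisible by every monic polynomial of degree at most $m$. Therefore the minimal polynomial of each $\bar a\in A/PA$ divides $\phi_{q,m}$, whence $\phi_{q,m}(\bar a)=0$, and $\phi_{q,m}$ is a nonzero element of $N_{D/P}(A/PA)$.

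The implication $(1)\Rightarrow(3)$ is the main obstacle, and I expect the finiteness of $D/P$ to be the crux. Given a nonzero $\bar g\in N_{D/P}(A/PA)$ of degree $m$, every element of $A/PA$ is annihilated by $\bar g$ and hence is algebraic of degree at most $m$ over $D/P$; this yields the bounded-degree half of $(3)$ immediately. For finiteness, the essential observation is that $D/P$ embeds as a subring of $A/PA$: using $A_P\cap K=D_P$ one shows that $D_P\cap\pi A_P=\pi D_P$, so the natural map $D/P=D_P/\pi D_P\to A_P/\pi A_P=A/PA$ is injective. Consequently $\bar g$ vanishes on the entire scalar subfield $D/P$, and since a nonzero polynomial of degree $m$ over a field has at most $m$ roots, $D/P$ has at most $m$ elements and is therefore finite. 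Verifying the embedding $D/P\hookrightarrow A/PA$ and the identification $N_{D/P}(A/PA)=N_{D_P/\pi D_P}(A_P/\pi A_P)$ are the technical points requiring care, but neither is deep.
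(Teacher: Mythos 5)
Your proposal is correct and follows essentially the same route as the paper: $(1)\Leftrightarrow(2)$ by lifting/reducing modulo a uniformizer of the DVR $D_P$, and $(1)\Leftrightarrow(3)$ via minimal polynomials, with finiteness of $D/P$ forced because a nonzero annihilating polynomial must vanish on the scalar copy of $D/P$ inside $A/PA$ (the paper phrases this as $N_{D/P}(A/PA)\subseteq N_{D/P}(D/P)$ and notes the null ideal of an infinite field is zero, which implicitly uses the same embedding you verify explicitly). The only cosmetic differences are that for $(3)\Rightarrow(1)$ you use $\phi_{q,m}$ as a common annihilator where the paper multiplies together all polynomials of degree at most $n$ over $D/P$; both work.
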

\begin{proof}
$(1) \Rightarrow (2)$ Let $g\in D[X]$ be a monic pullback of a nontrivial element $\overline{g}\in N_{D/P}(A/PA)$ and let $\pi\in P\setminus P^2$. Then, $g(A_P)\subseteq PA_P=\pi A_P$, so $\frac{g(X)}{\pi}\in \Int_K(A_P)\setminus D_P[X]$. 

$(2) \Rightarrow (1)$ Let $f(X)=\frac{g(X)}{d} \in \Int_K(A_P) \setminus D_P[X]$, with $g \in D[X] \setminus P[X]$ and  $d\in P$. Let $v_P$ denote the canonical valuation on $D_P$. If $v_P(d)=e>1$ and $\pi \in P\setminus P^2$, then $\pi^{e-1} f(X)$ is still an element of $\Int_K(A_P)$ which is not in $D_P[X]$. So, $g(A_P) \subseteq \frac{d}{\pi^{e-1}} A_P \subseteq \pi A_P$. Hence, $\overline{g}\in(D_P/PD_P)[X]\cong(D/P)[X]$ is a nontrivial element of $N_{D/P}(A/PA)$.

$(1) \Leftrightarrow (3)$ Note that
\begin{equation*}
N_{D/P}(A/PA)=\bigcap_{\overline{a}\in A/PA}N_{D/P}(\overline{a})=\bigcap_{\overline{a}\in A/PA}(\mu_{\overline{a}}(X))
\end{equation*}
where, for each $\overline{a}\in A/PA$, $\mu_{\overline{a}}\in (D/P)[X]$ is the minimal polynomial of $\overline{a}$ over the field $D/P$.

If $N_{D/P}(A/PA)$ is nonzero, then it is equal to a principal ideal generated by a monic non-constant  polynomial $\overline{g}\in (D/P)[X]$. Since $N_{D/P}(A/PA)\subseteq N_{D/P}(D/P)$, it follows that $D/P$ is finite (if not, then $N_{D/P}(D/P) = (0)$, because the only polynomial which is identically zero on an infinite field is the zero polynomial). Moreover, each element $\overline{a}\in A/PA$ is algebraic over $D/P$ (otherwise the corresponding $N_{D/P}(\overline{a})$ is zero) and its degree over $D/P$ is bounded by $\deg(\overline{g})$. 

Conversely, assume $D/P$ is finite and $A/PA$ is a $D/P$-algebraic algebra of bounded degree $n$. Then,  there are finitely many polynomials over $D/P$ of degree at most $n$, and the product of all such polynomials is a nontrivial element of $N_{D/P}(A/PA)$.
\end{proof}

We can now establish the promised criterion for $\Int_K(A)$ to be nontrivial.

\begin{Thm}\label{criterion}
Let $D$ be a Dedekind domain and let $A$ be a $D$-algebra with standard assumptions. Then $\Int_K(A)$ is nontrivial if and only if there exists a prime ideal $P$ of $D$ of finite index such that $A/PA$ is a $D/P$-algebraic algebra of bounded degree.
\end{Thm}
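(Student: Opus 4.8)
The plan is to reduce the global statement about $\Int_K(A)$ to a local statement at a single prime, where Proposition \ref{equivalent conditions} applies directly. Observe first that the right-hand condition---the existence of a prime $P$ of finite index (i.e.\ with $D/P$ finite) such that $A/PA$ is a $D/P$-algebraic algebra of bounded degree---is precisely condition (3) of Proposition \ref{equivalent conditions} for that prime $P$. By the equivalence $(2)\Leftrightarrow(3)$ there, this is the same as asking that $D_P[X]\subsetneq\Int_K(A_P)$, and by Lemma \ref{wellbehaviourlocalization} we may rewrite $\Int_K(A_P)=\Int_K(A)_P$. So the theorem becomes: $\Int_K(A)\ne D[X]$ if and only if $D_P[X]\subsetneq\Int_K(A)_P$ for some nonzero prime $P$. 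The bridge between the global and local pictures is the elementary fact that, since $D=\bigcap_{\mathfrak m}D_{\mathfrak m}$ over the maximal ideals of $D$, a polynomial lies in $D[X]$ exactly when it lies in $D_{\mathfrak m}[X]$ for every maximal ideal $\mathfrak m$ (checked coefficient by coefficient).

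For the direction assuming the prime $P$ exists, I would argue by contradiction. Condition (3) gives $D_P[X]\subsetneq\Int_K(A_P)=\Int_K(A)_P$ as above. If instead $\Int_K(A)$ were trivial, so $\Int_K(A)=D[X]$, then localizing this equality at $P$ would yield $\Int_K(A)_P=(D[X])_P=D_P[X]$, contradicting the strict containment. Hence $\Int_K(A)\ne D[X]$, i.e.\ $\Int_K(A)$ is nontrivial.

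For the converse, suppose $\Int_K(A)$ is nontrivial and choose $f\in\Int_K(A)\setminus D[X]$. By the intersection fact above, $f\notin D_P[X]$ for some maximal ideal $P$ of $D$ (a nonzero prime, since $D$ is Dedekind). By Lemma \ref{wellbehaviourlocalization}, $f\in\Int_K(A)\subseteq\Int_K(A)_P=\Int_K(A_P)$, while $f\notin D_P[X]$; thus $D_P[X]\subsetneq\Int_K(A_P)$, which is condition (2) of Proposition \ref{equivalent conditions}. Applying $(2)\Rightarrow(3)$ then shows that $D/P$ is finite---so $P$ has finite index---and that $A/PA$ is a $D/P$-algebraic algebra of bounded degree, exactly as required.

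I expect no serious obstacle once Lemma \ref{wellbehaviourlocalization} and Proposition \ref{equivalent conditions} are in hand: the whole argument is the localization principle that a single non-integral coefficient of $f$ pins down one prime $P$ at which the local criterion is met. The only points needing care are bookkeeping ones---verifying $D[X]=\bigcap_{\mathfrak m}D_{\mathfrak m}[X]$ coefficientwise, and confirming that the extracted prime is a nonzero (hence maximal) prime so that Proposition \ref{equivalent conditions} and the notion of finite index apply.
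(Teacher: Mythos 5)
Your proposal is correct and follows essentially the same route as the paper's proof: reduce to the local statement $D_P[X]\subsetneq \Int_K(A)_P$ via the local-global principle, then combine Lemma \ref{wellbehaviourlocalization} with the equivalence $(2)\Leftrightarrow(3)$ of Proposition \ref{equivalent conditions}. The only difference is that you spell out the local-global step (using $D=\bigcap_{\mathfrak m}D_{\mathfrak m}$ coefficientwise) that the paper dismisses with ``Clearly.''
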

\begin{proof}
Clearly, $D[X]\subsetneq \Int_K(A)$ if and only if there exists a prime ideal $P\subset D$ such that the two $D$-modules $D[X]$ and $\Int_K(A)$ are not equal locally at $P$, that is, $D_P[X]\subsetneq \Int_K(A)_P$. Since $\Int_K(A)_P=\Int_K(A_P)$ by Lemma \ref{wellbehaviourlocalization}, we can apply Proposition \ref{equivalent conditions} and we are done.
\end{proof}

\begin{Ex}\label{Nontriv examples}
Theorem \ref{criterion} applies to the following examples.
\begin{itemize}
\item[(1)] Let $D=\Z$ and $A=\overline{\Z}$, the absolute integral closure of $\Z$. Then, for each $n\in\N$, there exists $\alpha\in\overline{\Z}$ of degree $d>n$ such that $O_{\Q(\alpha)}=\Z[\alpha]$. It follows that for each prime $p\in\Z$, $\overline{\Z}/p\overline{\Z}$ is an algebraic $\Z/p\Z$-algebra of unbounded degree. Thus, $\Int_{\Q}(\overline{\Z})=\Z[X]$.

\item[(2)] Let $D=\Z_{(p)}$ and $A=\Z_p$. Then, $\Z_p/p\Z_p\cong\Z/p\Z$, so $\Z_{(p)}[X]\subsetneq \Int_{\Q}(\Z_p)$.

\item[(3)] Let $D=\Z$ and $A=\widehat{\Z}=\prod_{p\in\mathbb{P}}\Z_p$, the profinite completion of $\Z$, where $\mathbb{P}$ denotes the set of all prime numbers. For each prime $p\in\Z$, we have $p\widehat{\Z}=\prod_{p'\not=p}\Z_{p'}\times p\Z_p$, so $\widehat{\Z}/p\widehat{\Z}\cong \Z_p/p\Z_p\cong\Z/p\Z$. Thus, $\Z[X]\subsetneq\Int_{\Q}(\widehat{\Z})$.
\end{itemize}
\end{Ex}

If $\whA$ is the $P$-adic completion of a $D$-algebra $A$, then we can say more about $\Int_K(\whA)$. The following lemma also appears in \cite{IntdecompII}. We include it in its entirety since the proof is quite short.

\begin{Lem}\label{DVR lemma}
Let $D$ be a discrete valuation ring (DVR) with maximal ideal $P = \pi D$. Let $A$ be a $D$-algebra with standard assumptions, and let $\whA$ be the $P$-adic completion of $A$. Then, $\Int_K(\whA) = \Int_K(A)$.
\end{Lem}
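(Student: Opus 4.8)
The plan is to reduce the equality $\Int_K(\whA) = \Int_K(A)$ to a comparison of null ideals via Lemma~\ref{Null ideal lemma}(2), and then to exploit the good behaviour of the $P$-adic completion modulo powers of $\pi$. First I would record that $\whA$ again satisfies the standard assumptions, so that $\Int_K(\whA)$ is defined and the lemma applies: $\whA$ is torsion-free because $\pi$ is a nonzerodivisor on it (if $\pi x = 0$ with $x = (x_n)_n \in \whA$, torsion-freeness of $A$ forces each $x_n$, and hence $x$, to vanish), and $\whA \cap K = D$ because $A \cap K = D$ makes $A/D$ torsion-free, so that completing $0 \to D \to A \to A/D \to 0$ shows $\whD$ is saturated in $\whA$; this gives $\whA \cap \widehat{K} = \whD$, whence $\whA \cap K = \whD \cap K = D$, the last equality holding because $D$ is already a local DVR.

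Next, since $D$ is a DVR with $P = \pi D$, every nonzero $d \in D$ factors as $d = u\pi^k$ with $u$ a unit, so $dD = \pi^k D$, $dA = \pi^k A$, and $d\whA = \pi^k\whA$. Hence by Lemma~\ref{Null ideal lemma}(2) it is enough to prove, for every $k \ge 1$, that $N_{D/\pi^k D}(A/\pi^k A) = N_{D/\pi^k D}(\whA/\pi^k\whA)$ (the cases $k=0$ and $d$ a unit being trivial). This in turn follows from the natural isomorphism of $D/\pi^k D$-algebras $A/\pi^k A \xrightarrow{\sim} \whA/\pi^k\whA$ induced by $A \to \whA$: under it a polynomial over $D/\pi^k D$ kills one side exactly when it kills the other, so the two null ideals coincide and we are done.

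The step I expect to require the most care is precisely this completion isomorphism, since $A$ is assumed neither finitely generated nor Noetherian, so the usual citations do not apply verbatim; I would argue by hand from $\whA = \varprojlim_n A/\pi^n A$. The projection $\whA \to A/\pi^k A$ is surjective because the transition maps $A/\pi^{n+1}A \to A/\pi^n A$ are, and the only real work is to identify its kernel with $\pi^k\whA$. The inclusion $\pi^k\whA \subseteq \ker$ is clear; for the converse I would use torsion-freeness of $A$ to cancel $\pi$: since $\pi$ is a nonzerodivisor, multiplication by $\pi^k$ gives isomorphisms $A/\pi^{n-k}A \xrightarrow{\sim} \pi^k A/\pi^n A$, so the system $(\pi^k A/\pi^n A)_n$ has inverse limit $\pi^k\whA$. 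Taking inverse limits in the exact sequences $0 \to \pi^k A/\pi^n A \to A/\pi^n A \to A/\pi^k A \to 0$ — whose left-hand system has surjective transition maps, hence vanishing $\varprojlim^1$ — then produces the short exact sequence $0 \to \pi^k\whA \to \whA \to A/\pi^k A \to 0$, which is the isomorphism we want. Once the ability to ``divide by $\pi$'' supplied by torsion-freeness is in hand, the remaining verifications are routine bookkeeping.
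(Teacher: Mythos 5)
Your proof is correct, but it follows a genuinely different route from the paper's. The paper proves the nontrivial containment $\Int_K(A) \subseteq \Int_K(\whA)$ by a direct approximation argument: writing $f = g(X)/\pi^k$ with $g \in D[X]$, for $\alpha \in \whA$ it picks $a \in A$ with $\alpha \equiv a \pmod{\pi^k \whA}$ (via the projection $\whA \to A/\pi^k A$), notes that $g(\alpha) \equiv g(a) \pmod{\pi^k \whA}$ since the coefficients of $g$ are central, and concludes $f(\alpha) = f(a) + \lambda/\pi^k \in \whA$; the reverse containment is dismissed as clear because ``$A$ embeds in $\whA$''. You instead reduce the whole statement to Lemma \ref{Null ideal lemma}(2) together with the isomorphism $A/\pi^k A \cong \whA/\pi^k \whA$, proved by hand via inverse limits ($\varprojlim^1$ vanishing by Mittag--Leffler, plus torsion-freeness to identify $\varprojlim_n \pi^k A/\pi^n A$ with $\pi^k\whA$). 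The two arguments rest on the same completion fact: the paper's step ``$\alpha \equiv a \pmod{\pi^k\whA}$'' is precisely the kernel identification $\ker(\whA \to A/\pi^k A) = \pi^k \whA$ that you single out as the delicate point. What your route buys is rigor and uniformity: since $A$ is not assumed Noetherian or finitely generated, this isomorphism is not citable off the shelf and the paper takes it for granted; your verification that $\whA$ again satisfies the standard assumptions is genuinely needed to invoke Lemma \ref{Null ideal lemma}; and your null-ideal formulation treats both containments at once, which matters because the paper's ``clear'' direction is itself not innocent --- the map $A \to \whA$ has kernel $\bigcap_n \pi^n A$, which can be nonzero under the standard assumptions alone (e.g.\ $A = \Z_{(p)} \oplus \Q$ over $D = \Z_{(p)}$ embedded diagonally), and your argument is insensitive to this while the paper's phrasing is not. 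What the paper's route buys is brevity and elementarity: no homological machinery, and only the density-plus-kernel statement for the single map $\whA \to A/\pi^k A$ is needed rather than the full ring isomorphism. One small slip on your side: in checking torsion-freeness of $\whA$, the relation $\pi x = 0$ together with torsion-freeness of $A$ gives only $x_n \in \pi^{n-1}A$; you need the coherence $x_n \equiv x_{n+1} \pmod{\pi^n A}$ of the sequence to upgrade this to $x_n \in \pi^n A$, i.e.\ to $x = 0$.
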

\begin{proof}
The containment $\Int_K(\whA) \subseteq \Int_K(A)$ is clear, since $A$ embeds in $\whA$. Conversely, let $f \in \Int_K(A)$ and $\alpha \in \whA$. Suppose $f(X) = g(X)/\pi^k$, where $g \in D[X]$ and $k \in \N$. If $k = 0$, then the conclusion is clear, so assume that $k > 1$. 

Via the canonical projection $\whA \to A/\pi^k A$, we see that there exists $a\in A$ such that $\alpha \equiv a \pmod{\pi^k \whA}$. Since the coefficients of $g$ are central in $A$, we get $g(\alpha)\equiv g(a) \pmod{\pi^k \whA}$. Thus, $f(\alpha)=f(a)+\lambda/\pi^k$, where $\lambda\in\pi^k \whA$, so that $f(\alpha)\in \whA$. Hence, $f \in \Int_K(\whA)$ and $\Int_K(\whA) = \Int_K(A)$.
\end{proof} 

Thus, in Example \ref{Nontriv examples} (2), we have $\Int_\Q(A) = \Int(\Z_{(p)})$. Moreover, in Example \ref{Nontriv examples} (3) we have $\Int_\Q(A) = \Int(\Z)$ (see also \cite{ChabPer} where the profinite completion of $\Z$ was considered in order to study the polynomial overrings of $\Int(\Z)$). A more general example, which results in proper containments among all of $D[X]$, $\Int_K(A)$, and $\Int(D)$, is the following.

\begin{Ex}\label{DVR example}
Let $D$ be a DVR with maximal ideal $P = \pi D$ and finite residue field. Let $A$ be a $D$-algebra of finite type with standard assumptions and such that $\Int_K(A) \subsetneq \Int(D)$. Let $\whA$ be the $P$-adic completion of $A$. Then, $P$ satisfies the conditions of Theorem \ref{criterion} with respect to $A$, so $D[X] \subsetneq \Int_K(A)$; and $\Int_K(\whA) = \Int_K(A)$ by Lemma \ref{DVR lemma}. Thus,
\begin{equation*}
D[X] \subsetneq \Int_K(\whA) = \Int_K(A) \subsetneq \Int(D).
\end{equation*}
In general, $\whA$ is not finitely generated as a $D$-module (this is the case, for instance, when $A$ is countable but $\whA$ is uncountable). So, $\whA$ can provide an example of a $D$-algebra that is not finitely generated and for which the integer-valued polynomial ring is properly contained between $D[X]$ and $\Int(D)$.
\end{Ex}

\begin{Rem}\label{Quaternion example again}
Lemma \ref{DVR lemma} also gives us another approach to Example \ref{Quaternion example}. With notation as in that example, we have $\whA \cong M_2(\Z_p)$ (indeed, this follows from the fact that $A/p^k A \cong M_2(\Z/p^k \Z)$ for all $k > 0$). Thus, $\Int_\Q(A) = \Int_\Q(M_2(\Z_p)) = \Int_\Q(M_2(\Z_{(p)}))$ even though $A \not\cong M_2(\Z_{(p)})$.
\end{Rem}

We close this paper by using the conditions of Proposition \ref{equivalent conditions} to prove that when $D$ is Dedekind, $\Int_K(A)$ has Krull dimension 2. This result was shown by Frisch \cite[Thm. 5.4]{Fri1} in the case where $A$ is of finite type. Our work does not require $A$ to be finitely generated, and somewhat surprisingly does not require a full classification of the prime ideals of $\Int_K(A)$.

Recall that a nonzero prime ideal $\mfP$ of $\Int_K(A)$ is called unitary if $\mfP \cap D \ne (0)$, and is called non-unitary if $\mfP \cap D = (0)$.

\begin{Thm}\label{Height thm}
Let $D$ be a Dedekind domain and let $A$ be a $D$-algebra with standard assumptions. Let $\mfP$ be a nonzero prime ideal of $\Int_K(A)$.
\begin{enumerate}[(1)]
\item If $\mfP$ is non-unitary, then $\mfP$ has height 1.
\item If $\mfP$ is unitary, then let $P = \mfP \cap D$.
\begin{itemize}
\item[(i)] If $P$ does not satisfy any of the conditions of Proposition \ref{equivalent conditions}, then $\mfP$ has height 2.

\item[(ii)] If $P$ satisfies one of the conditions of Proposition \ref{equivalent conditions}, then $\mfP$ is maximal 
and has height at most 2.
\end{itemize}
\end{enumerate}
\end{Thm}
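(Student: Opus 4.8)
The plan is to prove all parts by localizing at the relevant prime of $D$, reducing to the case where $D$ is a DVR, and then analyzing the fibers of $\mathrm{Spec}\,\Int_K(A)$ over $\mathrm{Spec}\,D$ separately. The key observation enabling the reduction is that every prime $\mfQ \subseteq \mfP$ satisfies $\mfQ \cap D \subseteq \mfP \cap D$; since $\dim D = 1$, the contraction $\mfQ \cap D$ can only be $(0)$ or $P$, so every prime lying below $\mfP$ avoids the multiplicative set used in the localization. Consequently heights are preserved under localization, and combined with Lemma \ref{wellbehaviourlocalization} (which gives $\Int_K(A)_P = \Int_K(A_P)$) this lets me replace $D$ by the DVR $D_P$, with $PD_P = \pi D_P$, and $A$ by $A_P$.

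For part (1) I would localize at $S = D \setminus \{0\}$. Because $D[X] \subseteq \Int_K(A) \subseteq K[X]$ and $S^{-1}D[X] = K[X]$, we get $S^{-1}\Int_K(A) = K[X]$, a PID of dimension $1$. A non-unitary $\mfP$ meets $S$ trivially and so survives to a nonzero (hence height-$1$) prime of $K[X]$; since all primes below $\mfP$ are likewise non-unitary and survive, $\mfP$ itself has height $1$.

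For part (2), after localizing at $P$ I split according to Proposition \ref{equivalent conditions}. In case (i) the conditions fail, so condition (2) forces $\Int_K(A_P) = D_P[X]$, whose prime structure over the DVR $D_P$ is classical (total Krull dimension $2$): a unitary prime over $P$ contains $\pi$ and is either the extension $\pi D_P[X]$ or a maximal ideal $(\pi, h)$ with $h$ lifting an irreducible of $(D/P)[X]$, the latter being exactly the height-$2$ primes that witness the dimension. In case (ii), the decisive step is to show that the residue ring $\Int_K(A_P)/\pi\Int_K(A_P)$ is \emph{zero-dimensional}. Since $D/P$ is finite and $A/PA$ is algebraic of bounded degree $n$, the product $\overline{g_0}$ of all monic polynomials in $(D/P)[Y]$ of degree at most $n$ annihilates every element of $A/PA$. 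Lifting $\overline{g_0}$ to a monic $g_0 \in D[Y]$, for any $f \in \Int_K(A_P)$ and any $a \in A_P$ we have $g_0(f(a)) \in \pi A_P$, whence $g_0(f)/\pi \in \Int_K(A_P)$ and therefore $g_0(\overline f) = 0$ in the residue ring. Thus every element of $\Int_K(A_P)/\pi\Int_K(A_P)$ is integral over the field $D/P$, and a ring integral over a field is zero-dimensional.

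Zero-dimensionality then completes case (ii): every prime of $\Int_K(A_P)$ containing $\pi$ is maximal, so the unitary prime $\mfP$ is maximal; and since any unitary prime strictly below $\mfP$ would contain $\pi$ and hence have an image comparable to that of $\mfP$ in the zero-dimensional residue ring (forcing equality), everything strictly below $\mfP$ is non-unitary and forms a chain of length at most $1$ inside $K[X]$, giving height at most $2$. I expect the main obstacle to be precisely this integrality/zero-dimensionality argument—producing the annihilating monic polynomial $g_0$ and verifying that it descends to the residue ring—together with the bookkeeping that isolates the fiber over $(0)$ (governed by $K[X]$) from the fiber over $P$ (governed by the residue ring), so that the contributions of the two fibers cannot be stacked to a chain of length exceeding $2$.
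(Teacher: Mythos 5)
Parts (1) and (2)(ii) of your proposal are correct and follow essentially the same route as the paper. In (1), your localization at $S = D \setminus \{0\}$ is a direct proof of the correspondence with primes of $K[X]$ that the paper simply imports from Frisch. In (2)(ii), your key step --- $g_0(f(a)) \in \pi A_P$ for all $a \in A_P$, hence $g_0(f) \in \pi\Int_K(A_P)$ --- is exactly the paper's composition trick $g \circ f \in \pi\Int_K(A) \subseteq \mfP$; you finish by noting that $\Int_K(A_P)/\pi\Int_K(A_P)$ is integral over $D/P$ and hence zero-dimensional, while the paper instead passes to $\Int_K(A)/\mfP$, a domain annihilated by a nonzero polynomial over a finite field, hence a finite field. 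Your packaging is a clean equivalent (it yields maximality of every prime over $\pi$ at once; the paper's yields in addition that the residue field of $\mfP$ is finite), and your height-at-most-2 bookkeeping (no unitary prime strictly below $\mfP$, non-unitary primes of height 1) matches the paper's final paragraph.

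The genuine problem is case (2)(i), and it is one your own analysis half-detects without resolving. You correctly observe that when $\Int_K(A_P) = D_P[X]$ the unitary primes over $P$ correspond either to $\pi D_P[X]$ or to the maximal ideals $(\pi, h)$, and that only the latter have height 2. But then the claim to be proved --- that \emph{every} unitary $\mfP$ over such a $P$ has height 2 --- is not established by your argument, and in fact it fails: $\pi D_P[X]$ is a height-1 prime of the UFD $D_P[X]$ (it is generated by a prime element), and by your own height-preservation observation its contraction to $\Int_K(A)$ is a nonzero unitary prime over $P$ of height 1. So as written your argument proves only ``height at most 2'' in case (i). You should know that the paper's proof of (i) suffers from the same defect: it infers ``all the primes under consideration have height 2'' from ``$D_P[X]$ has dimension 2,'' which is a non sequitur precisely for the prime $\pi D_P[X]$. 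The statement that is true, that your fiber description actually establishes, and that is all Corollary \ref{Krull dimension} requires, is: in case (i) the prime $\mfP$ has height at most 2, with height exactly 2 precisely when $\mfP^e$ is one of the maximal ideals $(\pi, h)$. Either prove that weaker statement or explicitly flag the discrepancy with the claim as stated.
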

\begin{proof}
(1) Following \cite[Lem. 5.3]{Fri1}, the non-unitary prime ideals of $\Int_K(A)$ are in one-to-one correspondence with the prime ideals of $K[X]$. Since $K[X]$ has dimension 1, the non-unitary primes of $\Int_K(A)$ are all of height 1.

(2) Let $P$ be a nonzero prime of $D$. Assume first that $P$ does not satisfy any of the conditions of Proposition \ref{equivalent conditions}. Then, $D_P[X] = \Int_K(A_P) = \Int_K(A)_P$. It follows that the unitary primes of $\Int_K(A)$ are in one-to-one correspondence with the primes of $D_P[X]$. Since $D$ is Dedekind, we know that $D_P[X]$ has dimension 2; hence, all the primes of $\Int_K(A)$ under consideration have height 2.

For the remainder of the proof, assume that $P = \mfP \cap D$ does satisfy the conditions of Proposition \ref{equivalent conditions}. Since $\mfP \cap D = P$, the prime ideal $\mfP$ survives in $\Int_K(A)_P=\Int_K(A_P)$ and clearly its extension $\mfP^e$ is still a prime unitary ideal (so, $\mfP^e\cap D_P=PD_P$). It is sufficient to show that $\mfP^e$ is a maximal ideal, so we may work over the localizations. Thus, without loss of generality we will assume that $D$ is a DVR. In particular, this means that $P =\pi D$, for some $\pi\in D$.

Let $\overline{g}\in N_{D/P}(A/PA)$, $\overline{g}\not=0$, and let $g\in D[X]$ be a pullback of $g(X)$. Then $g(A) \subseteq PA=\pi A$. Consequently, for each $f\in\Int_K(A)$ we have $(g\circ f)(A)\subset \pi A$. Consider the ideal $\mfA = \{F \in \Int_K(A) \mid F(A) \subseteq PA\}$ of $\Int_K(A)$. Because $P = \pi  D$ is principal, we have $\mfA = \pi \Int_K(A)$, which is contained in $\mfP$. Hence, for each $f \in \Int_K(A)$, $g \circ f \in \mfP$. 

Now, if we consider the $D/P$-algebra $\Int_K(A)/\mfP$, we see that each element of $\Int_K(A)/\mfP$ is annihilated by $\overline{g}(X)$. But $\Int_K(A)/\mfP$ is a domain, and for it to be annihilated by a nonzero polynomial, it must be finite. Thus, in fact $\Int_K(A)/\mfP$ is a finite field, and so $\mfP$ is maximal.

Finally, to show that $\mfP$ has height at most 2, let $\mfQ$ be a prime of $\Int_K(A)$ such that $(0) \subsetneq \mfQ \subseteq \mfP$. If $\mfQ$ is unitary, then we have $\mfQ \cap D = P$, and by our work above $\mfQ$ is maximal, hence equal to $\mfP$. If $\mfQ$ is non-unitary, then it has height 1 by part (1) of the theorem. It follows that $\mfP$ has height at most 2.
\end{proof}

\begin{Cor}\label{Krull dimension}
Let $D$ be a Dedekind domain with quotient field $K$. Let $A$ be a $D$-algebra with standard assumptions. Then, $\Int_K(A)$ has Krull dimension 2.
\end{Cor}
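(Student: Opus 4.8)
The plan is to sandwich the Krull dimension between $2$ and $2$, using Theorem \ref{Height thm} for the upper bound and an explicit chain of primes for the lower bound.

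For the upper bound I would invoke Theorem \ref{Height thm} directly. It shows that every nonzero prime ideal of $\Int_K(A)$ has height at most $2$: non-unitary primes have height $1$ by part (1), while a unitary prime $\mfP$ has height $2$ or height at most $2$ by part (2), according to whether its contraction $P = \mfP \cap D$ fails or satisfies the conditions of Proposition \ref{equivalent conditions}. Since $(0)$ is a prime and nonzero primes exist, this gives $\dim \Int_K(A) \le 2$.

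For the lower bound, observe that since $D$ is Dedekind (and not a field) it has a nonzero prime ideal $P$. I would then exploit evaluation at $0$. Because any $f \in \Int_K(A)$ has constant term $f(0) \in A \cap K = D$, the map $\mathrm{ev}_0 \colon \Int_K(A) \to D$, $f \mapsto f(0)$, is a surjective ring homomorphism (it fixes the constants $D$). Setting $\mfQ = \{f \in \Int_K(A) \mid f(0) = 0\} = \ker(\mathrm{ev}_0)$ and $\mfP = \{f \in \Int_K(A) \mid f(0) \in P\}$, I obtain $\Int_K(A)/\mfQ \cong D$ and $\Int_K(A)/\mfP \cong D/P$, so both $\mfQ$ and $\mfP$ are prime, with $\mfP$ maximal. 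Since $X \in \mfQ$ and any $\pi \in P \setminus \{0\}$ lies in $\mfP \setminus \mfQ$, this produces a strict chain $(0) \subsetneq \mfQ \subsetneq \mfP$, whence $\dim \Int_K(A) \ge 2$. Combining the two bounds yields $\dim \Int_K(A) = 2$.

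The main subtlety to guard against is that the lower bound must hold uniformly, including the case in which \emph{every} nonzero prime $P$ of $D$ satisfies the conditions of Proposition \ref{equivalent conditions}; in that situation part (2)(i) of Theorem \ref{Height thm} produces no height-$2$ prime on its own, and every unitary prime is only guaranteed to have height at most $2$. The evaluation-at-$0$ chain sidesteps this entirely, manufacturing a unitary maximal ideal $\mfP$ together with a non-unitary prime $\mfQ$ beneath it regardless of whether $P$ satisfies those conditions, which is precisely why no classification of $\mathrm{Spec}(\Int_K(A))$ is needed. I would also flag that the hypothesis that $D$ is not a field is essential here, since for $D = K$ one has $\Int_K(A) = K[X]$, which has dimension $1$.
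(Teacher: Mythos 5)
Your proof is correct and is essentially the paper's own argument: the upper bound comes from Theorem \ref{Height thm}, and the lower bound from the chain $(0) \subsetneq \mfQ \subsetneq \mfP$, where your $\mfQ = \ker(\mathrm{ev}_0)$ is precisely the paper's non-unitary ideal $XK[X] \cap \Int_K(A)$ and your $\mfP = \{f \in \Int_K(A) \mid f(0) \in P\}$ is the same unitary prime the paper constructs. The only difference is organizational: the paper first disposes of the trivial case $\Int_K(A) = D[X]$ and, in the nontrivial case, invokes Theorem \ref{criterion} to choose a $P$ satisfying Proposition \ref{equivalent conditions}, whereas you observe (correctly) that the chain works uniformly for \emph{any} nonzero prime $P$ of $D$, so neither the case split nor the appeal to Theorem \ref{criterion} is actually needed for the lower bound; your caveat about excluding the case $D = K$ is also consistent with the paper's implicit convention that a Dedekind domain is not a field.
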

\begin{proof}
If $\Int_K(A) = D[X]$, then its dimension equals that of $D[X]$, which is 2. So, assume that $\Int_K(A)$ is nontrivial. By Theorem \ref{criterion}, there exists a prime $P$ of $D$ that satisfies the conditions of Proposition \ref{equivalent conditions}.

Let $\mfP = \{f \in \Int_K(A) \mid f(0) \in P \}$. Since $\Int_K(A) \subseteq \Int(D)$, $\mfP$ is an ideal of $\Int_K(A)$, and it is easily seen to be prime and unitary, with $\mfP \cap D = P$. Moreover, it contains the non-unitary ideal $XK[X] \cap \Int_K(A)$. Hence, $\mfP$ has height at least 2, and so $\dim(\Int_K(A)) \geq 2$. However, $\dim(\Int_K(A)) \leq 2$ by Theorem \ref{Height thm}, so we conclude that $\dim(\Int_K(A)) = 2$.
\end{proof}

\subsection*{Acknowledgments}
\noindent This research has been supported by  the grant ``Assegni Senior'' of the University of Padova. The authors wish to thank the referee for several suggestions which improved the quality of the paper.


\end{document}